\begin{document}

\newtheorem{theorem}[subsection]{Theorem}
\newtheorem{proposition}[subsection]{Proposition}
\newtheorem{lemma}[subsection]{Lemma}
\newtheorem{corollary}[subsection]{Corollary}
\newtheorem{conjecture}[subsection]{Conjecture}
\newtheorem{prop}[subsection]{Proposition}
\numberwithin{equation}{section}
\newcommand{\mr}{\ensuremath{\mathbb R}}
\newcommand{\mc}{\ensuremath{\mathbb C}}
\newcommand{\dif}{\mathrm{d}}
\newcommand{\intz}{\mathbb{Z}}
\newcommand{\ratq}{\mathbb{Q}}
\newcommand{\natn}{\mathbb{N}}
\newcommand{\comc}{\mathbb{C}}
\newcommand{\rear}{\mathbb{R}}
\newcommand{\prip}{\mathbb{P}}
\newcommand{\uph}{\mathbb{H}}
\newcommand{\fief}{\mathbb{F}}
\newcommand{\majorarc}{\mathfrak{M}}
\newcommand{\minorarc}{\mathfrak{m}}
\newcommand{\sings}{\mathfrak{S}}
\newcommand{\fA}{\ensuremath{\mathfrak A}}
\newcommand{\mn}{\ensuremath{\mathbb N}}
\newcommand{\mq}{\ensuremath{\mathbb Q}}
\newcommand{\half}{\tfrac{1}{2}}
\newcommand{\f}{f\times \chi}
\newcommand{\summ}{\mathop{{\sum}^{\star}}}
\newcommand{\chiq}{\chi \bmod q}
\newcommand{\chidb}{\chi \bmod db}
\newcommand{\chid}{\chi \bmod d}
\newcommand{\sym}{\text{sym}^2}
\newcommand{\hhalf}{\tfrac{1}{2}}
\newcommand{\sumstar}{\sideset{}{^*}\sum}
\newcommand{\sumprime}{\sideset{}{'}\sum}
\newcommand{\sumprimeprime}{\sideset{}{''}\sum}
\newcommand{\sumflat}{\sideset{}{^\flat}\sum}
\newcommand{\shortmod}{\ensuremath{\negthickspace \negthickspace \negthickspace \pmod}}
\newcommand{\V}{V\left(\frac{nm}{q^2}\right)}
\newcommand{\sumi}{\mathop{{\sum}^{\dagger}}}
\newcommand{\mz}{\ensuremath{\mathbb Z}}
\newcommand{\leg}[2]{\left(\frac{#1}{#2}\right)}
\newcommand{\muK}{\mu_{\omega}}
\newcommand{\thalf}{\tfrac12}
\newcommand{\lp}{\left(}
\newcommand{\rp}{\right)}
\newcommand{\Lam}{\Lambda_{[i]}}
\newcommand{\lam}{\lambda}
\def\L{\fracwithdelims}
\def\om{\omega}
\def\pbar{\overline{\psi}}
\def\phi{\varphi}
\def\lam{\lambda}
\def\lbar{\overline{\lambda}}
\newcommand\Sum{\Cal S}
\def\Lam{\Lambda}
\newcommand{\sumtt}{\underset{(d,2)=1}{{\sum}^*}}
\newcommand{\sumt}{\underset{(d,2)=1}{\sum \nolimits^{*}} \widetilde w\left( \frac dX \right) }

\theoremstyle{plain}
\newtheorem{conj}{Conjecture}
\newtheorem{remark}[subsection]{Remark}

\makeatletter
\def\widebreve{\mathpalette\wide@breve}
\def\wide@breve#1#2{\sbox\z@{$#1#2$}%
     \mathop{\vbox{\m@th\ialign{##\crcr
\kern0.08em\brevefill#1{0.8\wd\z@}\crcr\noalign{\nointerlineskip}%
                    $\hss#1#2\hss$\crcr}}}\limits}
\def\brevefill#1#2{$\m@th\sbox\tw@{$#1($}%
  \hss\resizebox{#2}{\wd\tw@}{\rotatebox[origin=c]{90}{\upshape(}}\hss$}
\makeatletter

\title[Sharp lower bounds for moments of quadratic Dirichlet $L$-functions]{Sharp lower bounds for moments of quadratic Dirichlet $L$-functions}

%%\date{\today}
\author{Peng Gao}
\address{School of Mathematical Sciences, Beihang University, Beijing 100191, P. R. China}
\email{penggao@buaa.edu.cn}
\begin{abstract}
 We establish sharp lower bounds for the $k$-th moment in the range $0 \leq k \leq 1$ of the family of quadratic Dirichlet $L$-functions at the central point.
\end{abstract}

\maketitle

\noindent {\bf Mathematics Subject Classification (2010)}: 11M06  \newline

\noindent {\bf Keywords}: moments, quadratic Dirichlet $L$-functions, lower bounds

\section{Introduction}
\label{sec 1}

  As moments of families of $L$-functions at the central point can be applied to address important issues such as non-vanishing results concerning these central values, they have been studied intensively in the literature.  For the family of quadratic Dirichlet $L$-functions,  asymptotic expressions are available only for the first four moments presently. These results are obtained by  M. Jutila \cite{Jutila} for the first two moments, by K. Soundararajan \cite{sound1} for the third moment and Q. Shen \cite{Shen} for the fourth moment. See also \cites{ViTa, DoHo, Young1, Young2, sound1, Sono} for various improvements on the error terms. Besides the above, conjectured asymptotic formulas for various families of $L$-functions are made in the work of J. P. Keating and N. C. Snaith \cite{Keating-Snaith02}, building on the relation with random matrix theory. More precise formulas including lower order terms are further conjectured by J. B. Conrey, D. W. Farmer, J. P. Keating, M. O. Rubinstein and N. C. Snaith in \cite{CFKRS}.

   Much progress has been made towards establishing bounds for moments of $L$-functions of the order of magnitude in agreement with the above conjectures. There are now several general approaches that allow one to achieve this. For upper bounds, one can apply a method due to K. Soundararajan in \cite{Sound01} together with its sharpened version by A. J. Harper \cite{Harper} or a principle built by M. Radziwi{\l\l} and K. Soundararajan in \cite{Radziwill&Sound}. For lower bounds, one can make use of a simple and powerful method developed by Z. Rudnick and K. Soundararajan in \cite{R&Sound, R&Sound1}, or a principle enunciated by W. Heap and K. Soundararajan in \cite{H&Sound} which can be regarded as dual to the corresponding one of Radziwi{\l\l} and Soundararajan concerning upper bounds.

   We now return to the case of quadratic Dirichlet $L$-functions. More specifically, we consider the family $\{ L(s, \chi_{8d}) \}$ with  $\chi_{8d} = \left(\frac{8d}{\cdot} \right)$ being the Kronecker symbol such that $d$ is odd and square-free. For this family, it is conjectured by J. C. Andrade and J. P. Keating \cite{Andrade-Keating01} that for all positive real $k$,
\begin{align*}
%%\label{upperlower}
  \sumstar_{\substack{ 0<d<X \\ (d,2)=1}}|L(\tfrac{1}{2},\chi_{8d})|^k \sim C_kX(\log X)^{\frac{k(k+1)}{2}},
\end{align*}
   where $\sumstar$ denotes the sum over square-free integers and $C_k$ are explicit constants.  Owing to the work of \cite{Harper, Sound01, R&Sound, R&Sound1, Radziwill&Sound1}, lower and upper bounds of the conjectured order of magnitude for the above family have been established for all real $k \geq 1$ unconditionally and for all real $k \geq 0$ under the generalized Riemann hypothesis, respectively.

   In \cite{Gao2021-2}, the author applied the above mentioned upper bounds principle of Radziwi{\l\l} and Soundararajan to obtain sharp upper bounds for the $k$-th moment of the family $\{ L(\half, \chi_{8d}) \}$ unconditionally for $0 \leq k \leq 2$. In this paper, based on the lower bounds principle of Heap and Soundararajan in \cite{H&Sound} together with an idea in \cite{Gao2021-2}, we further explore the commensurate lower bounds for the same family. As we have indicated above, sharp bounds for the $k$-th moment are already known for any real $k \geq 1$. Thus, we focus on the case $0 \leq k \leq 1$ here, even though our approach in this paper can be extended to treat the case for $k \geq 1$ as well.  Our result is as follows.
\begin{theorem}
\label{thmlowerbound}
   For any real number $k$ such that  $0 \leq k \leq 1$, we have
\begin{align*}
   \sumstar_{\substack{ 0<d<X \\ (d,2)=1}}|L(\tfrac{1}{2},\chi_{8d})|^{k} \gg_k X(\log X)^{\frac{k(k+1)}{2}}.
\end{align*}
\end{theorem}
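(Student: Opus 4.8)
\smallskip

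The plan is to deploy the lower bounds principle of Heap and Soundararajan \cite{H&Sound}, combined with an idea from \cite{Gao2021-2}. We may assume $0<k<1$ (the case $k=0$ is trivial, and $k=1$ is the lower bound half of Jutila's first moment asymptotic \cite{Jutila}), and we use throughout that $L(\tfrac12,\chi_{8d})\ge 0$, so that $|L(\tfrac12,\chi_{8d})|^{k}=L(\tfrac12,\chi_{8d})^{k}$. Fix $\nu=\nu(X)\to 0$ slowly enough that the primes in $(2,X^{\nu}]$ still satisfy $\sum_{p\le X^{\nu}}p^{-1}=(1+o(1))\log\log X$, and split them into blocks $I_1,\dots,I_K$ by size, putting $w_j=\sum_{p\in I_j}p^{-1}$; the $w_j$ are increasing and $\sum_j w_j=(1+o(1))\log\log X$. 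For odd squarefree $d$ write $\mathcal P_j(d)=\sum_{p\in I_j}\chi_{8d}(p)p^{-1/2}$ and set
\[
\mathcal N_j(d)=\sum_{0\le r\le M_j}\frac{\big((k-1)\mathcal P_j(d)\big)^{r}}{r!},\qquad
\widetilde{\mathcal N}_j(d)=\sum_{0\le r\le \widetilde M_j}\frac{\big(-(2-k)\mathcal P_j(d)\big)^{r}}{r!},
\]
with truncation lengths $M_j,\widetilde M_j$ proportional to $w_j$. Put $\mathcal N(d)=\prod_j\mathcal N_j(d)$ and $\widetilde{\mathcal N}(d)=\prod_j\widetilde{\mathcal N}_j(d)$; these are Dirichlet polynomials supported on integers $\le X^{o(1)}$, modelling $L(\tfrac12,\chi_{8d})^{k-1}$ and $L(\tfrac12,\chi_{8d})^{-(2-k)}$ respectively. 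Let $\mathcal G$ be the set of odd squarefree $0<d<X$ with $|\mathcal P_j(d)|\le w_j^{3/4}$ for every $j$. Moment bounds for the $\mathcal P_j(d)$ show that $d\notin\mathcal G$ contributes negligibly to every sum below, while on $\mathcal G$ one has $\mathcal N(d)>0$ and $\mathcal N(d)^{(2-k)/(1-k)}=(1+o(1))\widetilde{\mathcal N}(d)$, both being $(1+o(1))\exp\!\big(-(2-k)\sum_j\mathcal P_j(d)\big)$.

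\smallskip
\noindent\emph{Two twisted moments and H\"older.} From the asymptotic formula for the twisted first moment $\sumstar_{\substack{0<d<X\\(d,2)=1}}L(\tfrac12,\chi_{8d})\chi_{8d}(n)$ (via the approximate functional equation and Poisson summation in $d$, the main term coming from square $n$ and valid for $n\le X^{o(1)}$) one evaluates
\[
\sumstar_{d\in\mathcal G}L(\tfrac12,\chi_{8d})\,\mathcal N(d)\ \gg\ X(\log X)^{(k^{2}+1)/2},
\]
the exponent arising as $1+(k^{2}-1)/2$, where $(\log X)^{(k^{2}-1)/2}$ is the value of $\prod_{p\le X^{\nu}}\big(1+\tfrac{(k-1)+\frac12(k-1)^{2}}{p}+O(p^{-3/2})\big)$. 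Likewise, from the asymptotic for the twisted second moment $\sumstar_{\substack{0<d<X\\(d,2)=1}}L(\tfrac12,\chi_{8d})^{2}\chi_{8d}(n)$ one obtains
\[
\sumstar_{d\in\mathcal G}L(\tfrac12,\chi_{8d})^{2}\,\mathcal N(d)^{(2-k)/(1-k)}\ \ll\ \sumstar_{(d,2)=1}L(\tfrac12,\chi_{8d})^{2}\,\widetilde{\mathcal N}(d)+o(X)\ \ll\ X(\log X)^{1+k^{2}/2},
\]
the exponent being $3-\tfrac12(4-k^{2})$. Since $2-k$ and $(2-k)/(1-k)$ are conjugate exponents exceeding $1$, H\"older's inequality applied on $\mathcal G$ to the factorisation
\[
L(\tfrac12,\chi_{8d})\,\mathcal N(d)=\Big(L(\tfrac12,\chi_{8d})^{k}\Big)^{1/(2-k)}\Big(L(\tfrac12,\chi_{8d})^{2}\,\mathcal N(d)^{(2-k)/(1-k)}\Big)^{(1-k)/(2-k)}
\]
gives, upon rearranging and inserting the two displays above,
\[
\sumstar_{\substack{0<d<X\\(d,2)=1}}|L(\tfrac12,\chi_{8d})|^{k}\ \ge\ \sumstar_{d\in\mathcal G}L(\tfrac12,\chi_{8d})^{k}\ \ge\ \frac{\Big(\sumstar_{d\in\mathcal G}L(\tfrac12,\chi_{8d})\mathcal N(d)\Big)^{2-k}}{\Big(\sumstar_{d\in\mathcal G}L(\tfrac12,\chi_{8d})^{2}\mathcal N(d)^{(2-k)/(1-k)}\Big)^{1-k}}\ \gg\ \frac{\big(X(\log X)^{(k^{2}+1)/2}\big)^{2-k}}{\big(X(\log X)^{1+k^{2}/2}\big)^{1-k}},
\]
and the right-hand side simplifies to $X(\log X)^{k(k+1)/2}$, as desired.

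\smallskip
\noindent\emph{Main obstacle.} The deepest ingredient is the twisted second moment: one needs an upper bound of the conjectured size for $\sumstar L(\tfrac12,\chi_{8d})^{2}\chi_{8d}(n)$ with enough uniformity in $n$ to accommodate twisting by $\widetilde{\mathcal N}$ (of length $X^{o(1)}$), together with the replacement on $\mathcal G$ of the non-polynomial quantity $\mathcal N(d)^{(2-k)/(1-k)}$ by the genuine Dirichlet polynomial $\widetilde{\mathcal N}(d)$. The remaining technical heart, standard in the Heap--Soundararajan framework, is to choose the blocks $I_j$ and truncation levels $M_j,\widetilde M_j$ so that, simultaneously, the polynomials stay of length $X^{o(1)}$ (so the twisted moment formulas apply), the truncations are long enough (length $\gg w_j$) that on $\mathcal G$ each $\mathcal N_j(d)$ genuinely approximates the corresponding exponential, and the prime range still exhausts $(1+o(1))\log\log X$, so that the exponents $(k^{2}+1)/2$ and $1+k^{2}/2$ — hence the final $k(k+1)/2$ — come out exactly rather than diminished.
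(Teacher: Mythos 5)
Your proposal is substantively correct, and the Hölder bookkeeping checks out: with conjugate exponents $2-k$ and $(2-k)/(1-k)$ one indeed gets $\frac{(k^2+1)(2-k)}{2}-\frac{(k^2+2)(1-k)}{2}=\frac{k(k+1)}{2}$, so the first twisted moment ($\gg X(\log X)^{(k^2+1)/2}$) and the mollified second moment ($\ll X(\log X)^{1+k^2/2}$) yield precisely the desired exponent. However, you have followed the \emph{original} Heap--Soundararajan template, whereas the paper deliberately departs from it. Your argument applies a two-term H\"older and therefore has to cope with the non-polynomial quantity $\mathcal N(d)^{(2-k)/(1-k)}$, which you handle by restricting to a good set $\mathcal G$ where $|\mathcal P_j(d)|$ is small and replacing $\mathcal N^{(2-k)/(1-k)}$ by the genuine Dirichlet polynomial $\widetilde{\mathcal N}$, then arguing that the complement of $\mathcal G$ contributes negligibly. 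The paper instead avoids the good-set machinery entirely: using the pointwise inequality $\mathcal N(d,\alpha)\mathcal N(d,-\alpha)\ge 1$ (from \cite[Lemma 4.1]{Gao2021-2}), it inserts the extra factor $\mathcal N(d,2k-2)^{(1-c)/2}\mathcal N(d,2-2k)^{(1-c)/2}\ge 1$ into the first twisted moment and then applies a \emph{three}-term H\"older with exponents $2k/c,\,2/(1-c),\,((1+c)/2-c/(2k))^{-1}$, choosing $c=(2/k-3)^{-1}$; this splits the problem into three propositions each involving only honest Dirichlet polynomials, so no exceptional-set analysis or exponential approximation argument is needed. Both routes need the same core analytic inputs (the twisted first moment via Lemma~\ref{Prop1}, and a twisted second moment bound as in Proposition~\ref{Prop6}), so what the paper's variant buys is a cleaner reduction: every auxiliary sum is manifestly a short Dirichlet polynomial evaluated with Lemma~\ref{PropDirpoly}, while your version buys conceptual directness at the cost of the $\mathcal G$/$\widetilde{\mathcal N}$ bookkeeping, whose tail estimate you indicate only loosely (the error you write as $o(X)$ should really be $o\big(X(\log X)^{1+k^2/2}\big)$ and requires a moment bound on $\mathcal P_j(d)$ that you do not supply).
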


\section{Plan of Proof}
\label{sec 2'}

  We may assume that $X$ is a large number and we denote $\Phi$ for a smooth, non-negative function compactly supported on $[1/8,7/8]$ such that $\Phi(x) \leq 1$ for all $x$ and $\Phi(x) =1$ for $x\in [1/4,3/4]$. For convenience, we replace $k$ by $2k$ and assume that $0<k <1/2$ in the rest of the paper. Upon dividing $0<d<X$ into dyadic blocks, we see that in order to prove Theorem \ref{thmlowerbound}, it suffices to show that
\begin{align}
\label{lowerbound}
   \sumstar_{\substack{ 0<d<X \\ (d,2)=1}}|L(\tfrac{1}{2},\chi_{8d})|^{2k}\Phi(\frac dX) \gg_k X(\log X)^{\frac{2k(2k+1)}{2}}.
\end{align}
  To achieve this, we recall the lower bounds principle of Heap and Soundararajan given in \cite{H&Sound} for our setting. This builds on the observation that a result of B. Hough \cite{Hough} on the distribution of $\log |L(\half, \chi_{8d})|$ shows that the behaviour on average of  $|L(\tfrac{1}{2},\chi_{8d})|$ can be modeled by the quantity $(\log d)^{1/2}\exp({\mathcal P}(d))$, where
\begin{align*}
%%\label{defPd}
{\mathcal P}(d) = \sum_{p \leq z} \frac{1}{\sqrt{p}} \chi_{8d}(p),
\end{align*}
  with $z=X^{1/(\log \log X)^2}$. Now, the presence of $\Phi$ restricts the size of $\log d$ to be about $\log X$, so we may ignore it to see that the expression
\begin{align}
\label{prodLP}
 \sumstar_{\substack{ 0<d<X \\ (d,2)=1}}|L(\tfrac{1}{2},\chi_{8d})|^{k_1}\exp(k_2{\mathcal P}(d))
\end{align}
  should provide enough information towards our understanding of $|L(\tfrac{1}{2},\chi_{8d})|^{2k}$ on average as long as $k_1+k_2=2k$. As expanding $\exp(k_2{\mathcal P}(d))$ into a Dirichlet polynomial would result in an expression too long to control, the next idea is to approximate $\exp(k_2{\mathcal P}(d))$ by a suitable Taylor expansion (upon realizing that ${\mathcal P}(d)$ is often small in size). Further noticing that larger values of primes should contribute significantly less to the value of ${\mathcal P}(d)$ so that instead of approximating $\exp(k_2{\mathcal P}(d))$ by a single Taylor expansion, it is desirable to split ${\mathcal P}(d)$ into sums over primes of different ranges and approximate each corresponding exponential by tailoring a suitable Taylor polynomial.

 For this purpose,  we define for any non-negative integer $\ell$ and any real number $x$,
\begin{equation*}
%%\label{E_ell}
E_{\ell}(x) = \sum_{j=0}^{\ell} \frac{x^{j}}{j!}.
\end{equation*}
  We also let $N, M$ be two large natural numbers (depending on $k$ only) and denote $\{ \ell_j \}_{1 \leq j \leq R}$ for a sequence of even natural numbers such that $\ell_1= 2\lceil N \log \log X\rceil$ and $\ell_{j+1} = 2 \lceil N \log \ell_j \rceil$ for $j \geq 1$, where $R$ is defined to the largest natural number satisfying $\ell_R >10^M$.  We may assume that $M$ is so chosen so that we have $\ell_{j} > \ell_{j+1}^2$ for all $1 \leq j \leq R-1$ and this further implies that we have
\begin{align}
\label{sumoverell}
  R \ll \log \log \ell_1, \quad  \sum^R_{j=1}\frac 1{\ell_j} \leq \frac 2{\ell_R}.
\end{align}
  We denote ${ P}_1$ for the set of odd primes not exceeding $X^{1/\ell_1^2}$ and
${ P_j}$ for the set of primes lying in the interval $(X^{1/\ell_{j-1}^2}, X^{1/\ell_j^2}]$ for $2\le j\le R$.

  We set
\begin{equation*}
%% \label{defP}
{\mathcal P}_j(d) = \sum_{p\in P_j} \frac{1}{\sqrt{p}} \chi_{8d}(p),
\end{equation*}
 and for any real number $\alpha$,
\begin{align}
\label{defN}
{\mathcal N}_j(d, \alpha) = E_{\ell_j} (\alpha {\mathcal P}_j(d)), \quad \mathcal{N}(d, \alpha) = \prod_{j=1}^{R} {\mathcal N}_j(d,\alpha).
\end{align}
  Here we notice that it follows from \cite[Lemma 1]{Radziwill&Sound} that the quantities defined in \eqref{defN} are all positive. Instead of examining \eqref{prodLP}, we may now look at
\begin{align}
\label{prodLN}
 \sumstar_{\substack{ 0<d<X \\ (d,2)=1}}|L(\tfrac{1}{2},\chi_{8d})|^{k_1}\prod_{2 \leq j \leq J}\mathcal{N}(d, k_j),
\end{align}
 for real numbers $k_i$ satisfying $\displaystyle \sum_{1 \leq j \leq J}k_j=2k$.  It remains to apply H\"older's inequality to bound the expression in \eqref{prodLN} from above by averages of various quantities that are of the same form as the one appearing in \eqref{prodLN}, one of which being $|L(\tfrac{1}{2},\chi_{8d})|^{2k}$. Thus, if one is able to bound those other quantities on average from above and the expression in \eqref{prodLN} from below, then one should be able to obtain a lower bound estimation for the $2k$-th moment.

  More concretely, we may adapt the choices for $k_i$ in \eqref{prodLN} as those used in \cite{H&Sound} to see that, via H\"older's inequality,
\begin{align*}
%%\label{defPd}
 & \sumstar_{(d,2)=1}|L(\half, \chi_{8d})| \mathcal{N}(d, k) \mathcal{N}(d, k-1)   \Phi(\frac dX) \\
 \leq & \Big ( \sumstar_{(d,2)=1}|L(\half, \chi_{8d})|^{2k} \Phi(\frac dX)\Big )^{1/2}\Big ( \sumstar_{(d,2)=1}|L(\half, \chi_{8d})\mathcal{N}(d, k-1)|^2 \Phi(\frac dX) \Big)^{(1-k)/2}\Big ( \sumstar_{(d,2)=1}  \mathcal{N}(d, k)^{2/k}\mathcal{N}(d, k-1)^{2} \Phi(\frac dX) \Big)^{k/2}.
\end{align*}
  Although the above may allow one to obtain lower bounds estimation for the $2k$-th moment, it nevertheless requires estimations on averages of $|L(\half, \chi_{8d})| \mathcal{N}(d, k) \mathcal{N}(d, k-1)$ and $|L(\half, \chi_{8d})\mathcal{N}(d, k-1)|^2$, which we find a little inconvenient. Thus, we incorporate an idea given in \cite{Gao2021-2} by observing that it follows from \cite[Lemma 4.1]{Gao2021-2} that we have
\begin{align*}
 \mathcal{N}(d, \alpha)\mathcal{N}(d, -\alpha)  \geq 1.
\end{align*}
  We deduce from this that for any real $c$ such that $0<c<1$,
\begin{align*}
%%\label{basicbound}
\begin{split}
 & \sumstar_{(d,2)=1}L(\half, \chi_{8d}) \mathcal{N}(d,2k-1)  \Phi(\frac dX)  \leq \sumstar_{(d,2)=1}|L(\half, \chi_{8d})| \mathcal{N}(d,2k-1)  \Phi(\frac dX) \\
 \leq & \sumstar_{(d,2)=1}|L(\half, \chi_{8d})|^{c}\cdot |L(\half, \chi_{8d})|^{1-c} \mathcal{N}(2k-2, d)^{(1-c)/2}  \cdot \mathcal{N}(d,2k-1) \mathcal{N}(d, 2-2k)^{(1-c)/2} \Phi(\frac dX).
\end{split}
\end{align*}

  Applying H\"older's inequality with exponents being $2k/c, 2/(1-c), ((1+c)/2-c/(2k))^{-1}$ to the last sum above, we deduce that
\begin{align}
\label{basicbound1}
\begin{split}
 & \sumstar_{(d,2)=1}L(\half, \chi_{8d}) \mathcal{N}(d, 2k-1)   \Phi(\frac dX) \\
 \leq & \Big ( \sumstar_{(d,2)=1}|L(\half, \chi_{8d})|^{2k} \Phi(\frac dX)\Big )^{c/(2k)}\Big ( \sumstar_{(d,2)=1}|L(\half, \chi_{8d})|^2 \mathcal{N}(d, 2k-2)\Phi(\frac dX) \Big)^{(1-c)/2} \\
 & \times \Big ( \sumstar_{(d,2)=1}  \mathcal{N}(d, 2k-1)^{((1+c)/2-c/(2k))^{-1}}\mathcal{N}(d, 2-2k)^{(1-c)/2 \cdot ((1+c)/2-c/(2k))^{-1}} \Phi(\frac dX) \Big)^{(1+c)/2-c/(2k)}.
\end{split}
\end{align}

  We now set
$$(1-c)/2 \cdot ((1+c)/2-c/(2k))^{-1}=2$$
   which implies that $c=(\frac 2k-3)^{-1}$ and that
$$((1+c)/2-c/(2k))^{-1}=\frac {2(2-3k)}{1-2k}.$$
   One checks that the above value of $c$ does satisfy that $0<c<1$ when $0<k<1/2$.  We then deduce from \eqref{basicbound1} that in order to establish a lower bound estimation for the $2k$-th moment, it suffices to establish the following three propositions.
\begin{proposition}
\label{Prop4} With notations as above, we have
\begin{align*}
%%\label{Aestmation}
\sumstar_{(d,2)=1} L(\frac 12,\chi_{8d}){\mathcal N}(d, 2k-1)\Phi\Big(\frac{d}{X}\Big) \gg X (\log X)^{ \frac {(2k)^2+1}{2}}.
\end{align*}
\end{proposition}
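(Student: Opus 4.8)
To establish Proposition~\ref{Prop4} we must produce an \emph{asymptotic-type lower bound} (of the right order of magnitude) for the twisted first moment
$\sumstar_{(d,2)=1} L(\tfrac12,\chi_{8d}){\mathcal N}(d,2k-1)\Phi(d/X)$.
The first step is to replace $L(\tfrac12,\chi_{8d})$ by a suitable smoothed approximate functional equation, expressing it as a short Dirichlet polynomial $\sum_{n} \chi_{8d}(n) n^{-1/2} W(n/\sqrt d)$ up to a negligible error (for $\chi_{8d}$ even, the completed $L$-function forces a clean functional equation; since $d$ is square-free and odd the conductor is $8d$). The second step is to expand the product ${\mathcal N}(d,2k-1)=\prod_{j=1}^R E_{\ell_j}((2k-1){\mathcal P}_j(d))$ into a single, though long, Dirichlet polynomial $\sum_{m} a(m)\chi_{8d}(m) m^{-1/2}$ supported on squarefree $m$ built from primes in $\bigcup_j P_j$, whose total length is $\le X^{\delta}$ for a small $\delta$ because $\sum_j \ell_j/\ell_j^2 = \sum_j 1/\ell_j \le 2/\ell_R$ is tiny (this is exactly why the truncation parameters $\ell_j$ were chosen with $\ell_j>\ell_{j+1}^2$). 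Thus the sum becomes
$\sum_{n,m} \frac{a(m)}{\sqrt{nm}} \sumstar_{(d,2)=1}\chi_{8d}(nm)\,W\!\big(\tfrac{n}{\sqrt d}\big)\Phi\!\big(\tfrac dX\big)$.

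The third step is the arithmetic heart: evaluate $\sumstar_{(d,2)=1}\chi_{8d}(nm)W(\cdot)\Phi(d/X)$ by the standard Poisson-summation-over-$d$ technique for quadratic characters (as in Soundararajan \cite{sound1}, or in the form used in \cite{Gao2021-2}). The diagonal term, coming from $nm=\square$, produces the main term; since $m$ is squarefree this forces $n=m\cdot(\text{square})$, and after summing over $d$ and over the square part one gets a main term of size
$\asymp X\sum_{m}\frac{a(m)}{m}\cdot(\text{an Euler-product/harmonic factor})$.
One then shows this equals (a positive constant times) $X(\log X)^{((2k)^2+1)/2}$: the exponent arises because $\sum_{p\le z} a(p^{\cdots})$-type contributions, with the weight $(2k-1)$ from the Taylor polynomials combined with the weight $1$ from $L(\tfrac12,\chi_{8d})$ itself, give $\prod_{p\le z}(1+(2k-1)\cdot\tfrac1p+\tfrac1p+\cdots)\asymp (\log X)^{2k}$ from each of the two "linear" sources and $(\log X)^{(2k-1)^2}$-type contributions from the squares of ${\mathcal P}_j$; more precisely one checks that the total power of $\log X$ matches $\tfrac{(2k)^2+1}{2}=\tfrac12 + 2k^2$. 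Here one must use that each $E_{\ell_j}$ agrees with $\exp$ up to an error that is negligible on average — this is where Lemma~4.1 of \cite{Gao2021-2} (giving ${\mathcal N}(d,\alpha){\mathcal N}(d,-\alpha)\ge 1$ and the attendant pointwise/average control on the tails of the exponential) and the mean-value estimates for ${\mathcal P}_j(d)^{2\ell_j}$ enter, ensuring the truncated polynomials do not distort the main term.

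The fourth step is to bound the off-diagonal (dual) terms: the non-zero frequencies in the Poisson summation contribute sums of Gauss-type character sums against a rapidly decaying test function, and because $nm \le X^{1/2+\delta}$ is much smaller than $X$, the standard argument shows these are $O(X^{1-\varepsilon})$, hence negligible compared with the main term. Finally, one must confirm \emph{positivity}: since ${\mathcal N}(d,2k-1)>0$ for every $d$ by \cite[Lemma 1]{Radziwill&Sound}, and since $L(\tfrac12,\chi_{8d})\ge 0$ is \emph{not} known unconditionally, one cannot simply drop to positive terms; instead the lower bound must come out of the evaluated main term being genuinely positive of the stated size — this forces the approximate functional equation manipulation to be done carefully enough that the main term is identified exactly, not merely bounded. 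I expect the main obstacle to be precisely this last point combined with step three: controlling the cumulative error incurred by replacing $R$ separate exponentials by Taylor polynomials $E_{\ell_j}$ while still extracting the sharp constant and the sharp power $(\log X)^{((2k)^2+1)/2}$, i.e. showing the Taylor-truncation errors, summed over all $j$ and all $d<X$, are $o\big(X(\log X)^{((2k)^2+1)/2}\big)$. This is handled by choosing $N$ large (so $\ell_j$ are long enough that the tail $\sum_{i>\ell_j} \alpha^i {\mathcal P}_j^i/i!$ has small $2$nd or higher moment over $d$, using Hough's/standard quadratic large-sieve-type bounds for $\sumstar \chi_{8d}(\square\text{-free})$), exactly as in \cite{H&Sound}.
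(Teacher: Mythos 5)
Your high-level strategy---expand ${\mathcal N}(d,2k-1)$ as a short Dirichlet polynomial, pair against $L(\tfrac12,\chi_{8d})$, identify the diagonal contribution as the main term, and control the rest---is broadly the same as the paper's. But there are several specific deviations and gaps worth flagging.

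First, the paper does not re-derive the twisted first moment from the approximate functional equation and Poisson summation; it invokes it directly as Lemma~\ref{Prop1}, which is imported from Soundararajan's result (\cite[Proposition 1.1--1.2]{sound1}, specialized as in \cite[Lemma 2.4]{Gao2021-2}). The whole point of that lemma is that the twisted first moment
$\sumstar_{(d,2)=1}L(\tfrac12,\chi_{8d})\chi_{8d}(l)\Phi(d/X)$
is \emph{already} asymptotically evaluated with explicit main term $\frac{C\widehat\Phi(1)}{\zeta(2)\sqrt{l_1}g(l)}X\big(\log\tfrac{\sqrt X}{l_1}+\cdots\big)$ and power-saving error. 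Since ${\mathcal N}(d,2k-1)$ has total length $\le X^{2/10^M}$, the error term is negligible term-by-term and one may simply plug in. Your proposal would work but redoes arithmetic that is available off the shelf; that choice also makes your step three vaguer than it needs to be.

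Second, and more substantively, you do not give a concrete mechanism for controlling the truncation error coming from the constraint $\Omega(n_j)\le\ell_j$ inside each $E_{\ell_j}$. You gesture at ``mean-value estimates for ${\mathcal P}_j(d)^{2\ell_j}$'', ``large-sieve-type bounds'', and ``Lemma~4.1 of \cite{Gao2021-2}'', none of which is what the proof actually requires here. (The inequality ${\mathcal N}(d,\alpha){\mathcal N}(d,-\alpha)\ge1$ from \cite{Gao2021-2} is used in the set-up of Section~\ref{sec 2'} to arrange the H\"older exponents, not in the proof of Proposition~\ref{Prop4}.) The actual device is elementary: Rankin's trick. Because $b_j(n_j)=0$ when $\Omega(n_j)>\ell_j$, one inserts $2^{\Omega(n_j)-\ell_j}\ge1$ and bounds the lost tail by $2^{-\ell_j}$ times an Euler product, which by taking $N$ large (so that $\sum_{p\in P_j}1/p\asymp\ell_j/N$) is $\le 2^{-\ell_j/2}$ times the target main-term Euler factor. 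This is the reason the $\ell_j$ are chosen of that particular size; no large sieve or moment computation over $d$ enters.

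Third, once Lemma~\ref{Prop1} is applied, the main term naturally splits as $S_1-S_2$, where $S_1$ carries the $\tfrac12\log X$ piece (which yields the dominant $X(\log X)^{((2k)^2+1)/2}$) and $S_2$ carries $-\log\big(\prod_i(n_i)_1\big)$. Bounding $S_2$ from above, and showing that for $M$ large it is a small multiple of $S_1$, is a necessary step your outline omits entirely. It is not automatic; one has to open up the $\log$ as a sum over primes $q$, re-factor the remaining Euler product, and appeal to Mertens-type bounds (Lemma~\ref{RS}) to get the $\log X/10^{2M}$ saving.

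Fourth, a small correction: the Dirichlet polynomial for ${\mathcal N}(d,2k-1)$ is \emph{not} supported on squarefree $m$; the coefficients $b_j(n_j)$ allow any $n_j$ with $\Omega(n_j)\le\ell_j$ and prime factors in $P_j$, and the $n_j=p^2$ terms in fact contribute the $\tfrac{(2k-1)^2}{2p}$ piece of the exponent. Relatedly, your bookkeeping of the $\log$-power (``$(\log X)^{2k}$ from each of the two linear sources and $(\log X)^{(2k-1)^2}$ from the squares'') does not add up; the correct Euler factor at $p$ is $1+\big(\tfrac{(2k-1)^2}{2}+(2k-1)\big)\tfrac1p+\cdots$, which after multiplying by the $\log\sqrt X$ main-term factor gives $\big(\tfrac{(2k-1)^2}{2}+2k-1\big)+1=2k^2+\tfrac12=\tfrac{(2k)^2+1}{2}$.

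Finally, your worry about $L(\tfrac12,\chi_{8d})\ge0$ is moot here: the argument does not attempt to discard terms by sign, it evaluates the full sum via Lemma~\ref{Prop1}, and the positivity of the resulting $S_1-S_2$ is established by direct computation of the Euler products, not assumed.
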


\begin{proposition}
\label{Prop5} With notations as above, we have
\begin{align*}
%%\label{Nestmation}
\sumstar_{(d,2)=1}\mathcal{N}(d, 2k-1)^{\frac {2(2-3k)}{1-2k}}\mathcal{N}(d, 2-2k)^{2} \Phi\Big(\frac{d}{X}\Big) \ll X ( \log X  )^{\frac {(2k)^2}{2}}.
\end{align*}
\end{proposition}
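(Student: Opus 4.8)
\textit{(Proof sketch.)} Set $\beta_1=a(2k-1)$ and $\beta_2=2(2-2k)$ where $a=\tfrac{2(2-3k)}{1-2k}$, so that the quantity to be bounded is
\[
\sumstar_{(d,2)=1}\ \prod_{j=1}^{R} E_{\ell_j}\big((2k-1){\mathcal P}_j(d)\big)^{a}\,E_{\ell_j}\big((2-2k){\mathcal P}_j(d)\big)^{2}\,\Phi\Big(\frac dX\Big),
\]
and note the algebraic identity $\beta_1+\beta_2=2k$ (indeed $a(2k-1)=-2(2-3k)$, so $\beta_1+\beta_2=-2(2-3k)+(4-4k)=2k$), which is exactly what forces the target exponent $\tfrac{(2k)^2}{2}$. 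Since the $\ell_j$ are even, each factor above is positive by \cite[Lemma 1]{Radziwill&Sound}, so the integrand is a well-defined positive quantity. The first step is to replace it, from above, by an honest Dirichlet polynomial in $\chi_{8d}$: the plan is to prove an elementary inequality of the shape $E_{\ell}(x)^{\beta}\le E_{\lceil\beta\rceil\ell}(\beta x)+(\text{tail})$, valid for $\ell$ even and $|x|$ not too large relative to $\ell$ --- in the spirit of \cite[Lemma 4.1]{Gao2021-2} --- and to dispose of the $d$ for which some ${\mathcal P}_j(d)$ is atypically large (so that the tail is not negligible) by bounding a high even moment of ${\mathcal P}_j(d)$, which itself reduces to the square-detecting character sum recorded below. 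After this reduction we must bound $\sumstar_{(d,2)=1}{\mathcal D}(d)\,\Phi(d/X)$ for a Dirichlet polynomial ${\mathcal D}(d)=\sum_n c(n)\chi_{8d}(n)$ with $c(n)\ge 0$, supported on $n$ composed solely of primes $\le z=X^{1/(\log\log X)^2}$.

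The decisive structural feature is that ${\mathcal D}$ is extremely short. Its $j$-th factor is a polynomial in ${\mathcal P}_j(d)$ of degree $\ll \ell_j$ supported on primes $\le X^{1/\ell_j^2}$, so ${\mathcal D}$ is supported on $n\le \prod_j X^{C\ell_j/\ell_j^2}=X^{C\sum_j 1/\ell_j}\le X^{2C/\ell_R}\le X^{2C/10^M}$ by \eqref{sumoverell}, which is $\le X^{\delta}$ for any prescribed $\delta>0$ once $M=M(k,\delta)$ is chosen large enough --- this is the reason for requiring $\ell_R>10^M$ with $M$ large. The plan is then to invoke the standard evaluation of the quadratic character sum: for $n\le X^{1-\varepsilon}$,
\[
\sumstar_{(d,2)=1}\chi_{8d}(n)\,\Phi\Big(\frac dX\Big)=\mathbf{1}_{n=\square}\cdot c_1 X\prod_{p\mid n}\frac{p}{p+1}\ +\ (\text{error}),
\]
with $c_1>0$ a constant and the main term present only for perfect squares $n$; this comes from Poisson summation in $d$ over residue classes, and the contribution of non-squares together with the smooth-cutoff error is $O(X^{1-\varepsilon'})$ once ${\mathcal D}$ has length $\le X^{\delta}$ with $\delta$ sufficiently small, by the argument used for this family in \cite{Gao2021-2,sound1}.

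It remains to estimate the diagonal term $X\sum_{n=\square}c(n)\prod_{p\mid n}\tfrac{p}{p+1}$, which factors as an Euler product over the primes $p\le z$. For $p$ lying in the range $P_j$ the local factor equals $1+\tfrac1p\big[\text{coeff.\ of }t^2\text{ in the two truncated exponentials in }\beta_1 t,\ \beta_2 t\big]+O(p^{-2})$; because the $\ell_j$ obey $\ell_j^2\ll\log X$ the truncation is harmless on each $P_j$, and the bracketed coefficient is $\tfrac{\beta_1^2}{2}+\beta_1\beta_2+\tfrac{\beta_2^2}{2}=\tfrac{(\beta_1+\beta_2)^2}{2}=\tfrac{(2k)^2}{2}$. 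Hence the Euler product is $\prod_{p\le z}\big(1+\tfrac{(2k)^2/2}{p}+O(p^{-2})\big)\asymp(\log z)^{(2k)^2/2}\asymp(\log X)^{(2k)^2/2}$, since $\log\log z\sim\log\log X$, and therefore the whole sum is $\ll X(\log X)^{(2k)^2/2}$, as required. The main obstacle I anticipate is the first step: making the passage from the real powers ${\mathcal N}(\cdot)^{a}$, ${\mathcal N}(\cdot)^{2}$ to a genuine short Dirichlet polynomial rigorous --- one must check simultaneously that the auxiliary inequality for $E_\ell(x)^\beta$ loses nothing of substance, and that the rare large deviations of the ${\mathcal P}_j(d)$, summed over all $R\ll\log\log\log X$ ranges, contribute negligibly --- and it is precisely to make this accounting work that the $\ell_j$ are defined as they are. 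The subsequent diagonal/off-diagonal analysis is by now standard for this family.
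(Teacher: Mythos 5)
Your proposal follows essentially the same approach as the paper. The paper's key device is its Lemma~\ref{lem1}, which packages your proposed factor-wise inequality into one step: for each $j$,
${\mathcal N}_j(d, 2k-1)^{\frac{2(2-3k)}{1-2k}}{\mathcal N}_j(d, 2-2k)^{2}\le {\mathcal N}_j(d,2k)\big(1+O(e^{-\ell_j})\big)+{\mathcal Q}_j(d)$
with ${\mathcal Q}_j(d)=\big(24{\mathcal P}_j(d)/\ell_j\big)^{2r_k\ell_j}$, proved by splitting on $|{\mathcal P}_j(d)|\le\ell_j/20$ versus the opposite --- exactly the small/large dichotomy you anticipate, and the algebraic identity $\beta_1+\beta_2=2k$ you isolate is precisely what makes ${\mathcal N}_j(d,2k)=E_{\ell_j}(2k{\mathcal P}_j(d))$ emerge as the combined main term. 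The remainder --- short Dirichlet polynomial of length $X^{O(1/\ell_R)}$, the Poisson/character-sum evaluation of Lemma~\ref{PropDirpoly}, and the diagonal Euler product over $p\le z$ giving exponent $\tfrac{(2k)^2}{2}$ --- proceeds exactly as you describe. One small inaccuracy worth flagging: if you apply your inequality to each factor separately, the resulting main Dirichlet polynomial is of the form $E_{\lceil a\rceil\ell_j}(\beta_1{\mathcal P}_j)\,E_{2\ell_j}(\beta_2{\mathcal P}_j)$ with $\beta_1=a(2k-1)<0$, so its Dirichlet coefficients are \emph{not} all nonnegative as you assert; the paper avoids this by landing directly on the single exponential ${\mathcal N}_j(d,2k)$ with $2k>0$, whose coefficients are nonnegative. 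This does not invalidate your sketch --- you only need to evaluate the diagonal of an expression that upper-bounds a nonnegative quantity --- but it does mean the paper's packaging is the cleaner one, and it is what you would naturally be led to upon trying to carry out the diagonal computation with mixed signs.
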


\begin{proposition}
\label{Prop6} With notations as above, we have
\begin{align*}
%%\label{L2estmation}
\sumstar_{(d,2)=1} L(\frac 12,\chi_{8d})^2{\mathcal N}(d, 2k-2)\Phi\Big(\frac{d}{X}\Big)  \ll X ( \log X  )^{\frac {(2k)^2+2}{2}}.
\end{align*}
\end{proposition}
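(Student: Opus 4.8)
The plan is to evaluate the sum, call it $\mathcal S$, by the standard machinery for twisted second moments of the family $\{L(\tfrac12,\chi_{8d})\}$, the twist by $\mathcal N(d,2k-2)$ supplying the saving that lowers the size from $(\log X)^{3}$ to $(\log X)^{2k^{2}+1}$. Since for odd squarefree $d$ the character $\chi_{8d}$ is primitive and even modulo $8d$ with root number $+1$, an approximate functional equation gives
\begin{align*}
L(\tfrac12,\chi_{8d})^{2}=2\sum_{n\ge1}\frac{d(n)\chi_{8d}(n)}{\sqrt n}\,W\!\Big(\frac{n}{8d}\Big),
\end{align*}
where $W$ is smooth of rapid decay, so that effectively $n\ll X^{1+\varepsilon}$ on $\mathrm{supp}\,\Phi(\cdot/X)$. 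Multiplying out the definition \eqref{defN} of $\mathcal N(d,2k-2)$ one may write $\mathcal N(d,2k-2)=\sum_{m}b(m)\chi_{8d}(m)m^{-1/2}$, where $b$ is supported on odd $m$ whose $P_{j}$-part has at most $\ell_{j}$ prime factors counted with multiplicity, so $m\le\prod_{j}\big(X^{1/\ell_{j}^{2}}\big)^{\ell_{j}}\le X^{2/\ell_R}$, and $b(m)=\prod_{p^{e}\|m}\tfrac{(2k-2)^{e}}{e!}$ subject to that truncation; in particular $\sum_m|b(m)|\,m^{-1}\ll_k(\log X)^{2-2k}$. Substituting and interchanging the order of summation, the task becomes to bound
\begin{align*}
\mathcal S=2\sum_{n\ge1}\sum_{m}\frac{d(n)\,b(m)}{\sqrt{nm}}\sumstar_{(d,2)=1}\chi_{8d}(nm)\,W\!\Big(\frac{n}{8d}\Big)\Phi\!\Big(\frac dX\Big).
\end{align*}

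I would then open the sum over $d$ by the Poisson summation formula for quadratic characters of Soundararajan \cite{sound1} (after detecting the squarefree condition through $\sum_{\alpha^{2}\mid d}\mu(\alpha)$), exactly as in the treatment of the second moment in \cite{sound1} and of its twists in \cite{Gao2021-2}. This produces a decomposition $\mathcal S=\mathcal M+\mathcal E$, in which $\mathcal M$ is the contribution of the zero frequency, non-zero only when $nm$ is a perfect square, and $\mathcal E$, coming from the dual sum, carries Gauss sums $\sum_{x\bmod nm}\big(\tfrac{x}{nm}\big)e(\kappa x/nm)$. Bounding $\mathcal E$ is the technically heaviest step and cannot be done by inserting absolute values term by term; one sums the dual sum over $n$ first and exploits the cancellation in these Gauss sums together with the rapid decay of $\widehat\Phi$ (which confines matters to $nm\gg X^{1-\varepsilon}$), precisely as in the references. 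Since $m$ runs only over $m\le X^{2/\ell_R}$ with $\ell_R$ depending on $k$ alone, the extra summation costs no more than $X^{O(1/\ell_R)}$, and one obtains $\mathcal E\ll X^{1-\eta}$ for some $\eta=\eta(k)>0$, which is negligible.

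It remains to show $\mathcal M\ll X(\log X)^{2k^{2}+1}$. Writing $W$ through its Mellin transform $\widetilde W$ (holomorphic apart from a simple pole at $s=0$) and performing the zero-frequency computation, $\mathcal M$ takes the shape $c\,X\cdot\frac1{2\pi i}\int_{(2)}\widetilde W(s)\widetilde\Phi(s)(8X)^{s}A(s)\,\dif s$, with $\widetilde\Phi$ entire of rapid decay, $c$ absolute, and
\begin{align*}
A(s)=\sum_{\substack{(nm,2)=1\\ nm=\square}}\frac{d(n)\,b(m)\,g(nm)}{(nm)^{1/2}\,n^{s}}=\prod_p A_p(s),\qquad A_p(s)=1+\frac{p}{p+1}\sum_{\substack{a,b\ge0,\ (a,b)\ne(0,0)\\ a+b\ \mathrm{even}}}(a+1)\,\frac{(2k-2)^{b}}{b!}\,p^{-(a+b)/2-as},
\end{align*}
where $g$ is a bounded multiplicative function arising from Poisson and the sieve, and $b(p^{b})$ is subject to the truncation (which only affects $b>\ell_{j(p)}$ and is harmless). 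The three terms with $a+b=2$ give at $s=0$ the coefficient $3+2(2k-2)+\tfrac12(2k-2)^{2}=2k^{2}+1$ of $p^{-1}$, a genuine cancellation among the signed coefficients; keeping the $s$-dependence, with $Y:=X^{1/\ell_R^{2}}$,
\begin{align*}
A(s)=\zeta(1+2s)^{3}\cdot\Big(\prod_{p\le Y}\big(1-p^{-1-s}\big)^{-2(2k-2)}\Big)\cdot\Big(\prod_{p\le Y}\big(1-p^{-1}\big)^{-(2k-2)^{2}/2}\Big)\cdot B(s),
\end{align*}
with $B$ holomorphic and bounded near $s=0$. Here the second factor — encoding the correlation between $L^{2}$ and $\mathcal N(d,2k-2)$ — is entire, of size $\asymp(\log X)^{4k-4}$ at $s=0$ and varying slowly away from $s=0$, while the third factor is independent of $s$ and of size $\asymp(\log X)^{(2k-2)^{2}/2}$ (recall $\log Y\asymp_k\log X$). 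Moving the contour to $\mathrm{Re}(s)=1/\log X$ and estimating — using, for $|\mathrm{Im}\,s|\gg1/\log X$, the oscillation of $(8X)^{s}$ via repeated integration by parts so as not to overcount — the contribution of $|\mathrm{Im}\,s|\ll1/\log X$ is $\ll X(\log X)^{3+(4k-4)+(2k-2)^{2}/2}=X(\log X)^{2k^{2}+1}$ and the rest is smaller. Hence $\mathcal M\ll X(\log X)^{2k^{2}+1}$, and therefore $\mathcal S=\mathcal M+\mathcal E\ll X(\log X)^{2k^{2}+1}=X(\log X)^{\frac{(2k)^{2}+2}{2}}$.

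I expect the main obstacle to be twofold. First, the estimation of the error term $\mathcal E$ rests on the cancellation in the Gauss sums produced by Poisson summation, which is the crux of every evaluation of the second moment of this family; it is, however, directly available from \cite{sound1, Gao2021-2}. Secondly, and this is the point special to the present setting, the evaluation of $\mathcal M$ requires genuine care: one must carry the signs of the coefficients $b(m)$ all the way into the Euler product — replacing $b(m)$ by $|b(m)|$ would already inflate the exponent $2k^{2}+1$ to $3+2(2-2k)+\tfrac12(2-2k)^{2}=2k^{2}-8k+9$ — and one must handle the partial Euler product $\prod_{p\le Y}(1-p^{-1-s})^{-2(2k-2)}$, which has no true singularity at $s=0$ yet whose variation matches the scale of the pole of $\zeta(1+2s)^{3}$, carefully enough (via the oscillation of the $X^{s}$ factor) that no spurious power of $\log X$ is lost.
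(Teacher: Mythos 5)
The paper does not actually prove Proposition \ref{Prop6}; it remarks only that it ``can be established similar to [Gao2021-2, Proposition 3.1]'' and moves on. Your sketch fills in what that reference would do, and it follows the standard route for a twisted second moment of this family: approximate functional equation for $L(\tfrac12,\chi_{8d})^2$, quadratic Poisson summation \`a la Soundararajan after opening the squarefree condition with a M\"obius sum, separation into the zero-frequency main term (supported on $nm=\square$) and a dual error term carrying Gauss sums, and an Euler-product analysis of the main term. This is indeed the method of the cited source, so your proposal and the paper's intended proof coincide in approach.

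Your key diagnostic computation is correct: in the local factor at $p$ the three $a+b=2$ terms contribute $3+2(2k-2)+\tfrac12(2k-2)^2=2k^2+1=\tfrac{(2k)^2+2}{2}$ as the effective log-exponent, and you rightly insist that the signs of the $\mathcal{N}(d,2k-2)$-coefficients (here $2k-2<-1$) must be carried into the Euler product, since $|b(m)|$ would give the much larger exponent $2k^2-8k+9$. The one place where the sketch is thinner than a full proof is the claim that the error term from the non-zero Poisson frequencies satisfies $\mathcal{E}\ll X^{1-\eta}$ after the extra twist by a polynomial of length $X^{O(1/\ell_R)}$, and that the partial Euler products over $p\le Y$ contribute their $s=0$ values up to bounded factors after truncating the contour near $s=0$; both assertions are true and routine in the references you cite, but they are what would need to be written out for a complete argument. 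As it stands, your proposal is a faithful and correctly calibrated summary of the argument the paper delegates to [Gao2021-2, Proposition 3.1].
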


   In fact, combining \eqref{basicbound1} with the above three propositions, we see that
\begin{align*}
\begin{split}
 & X (\log X)^{ \frac {(2k)^2+1}{2}} \ll \Big ( \sumstar_{(d,2)=1}|L(\half, \chi_{8d})|^{2k} \Phi(\frac dX)\Big )^{c/(2k)}\Big ( X (\log X)^{ \frac {(2k)^2+2}{2}}\Big)^{(1-c)/2}\Big (  X (\log X)^{ \frac {(2k)^2}{2}}\Big)^{(1+c)/2-c/(2k)}.
\end{split}
\end{align*}
  The desired lower bound given in \eqref{lowerbound} follows immediately from this.

  Notice that Proposition \ref{Prop6} can be established similar to \cite[Proposition 3.1]{Gao2021-2} so that it remains to prove Propositions \ref{Prop4} and \ref{Prop5} in the rest of the paper.

\section{Preliminaries}
\label{sec 2}

  We include in this section a few lemmas that are needed in our proofs. In what follows, we denote the letter $p$ for a prime number and the symbol $\square$ for a perfect square. We define $\delta_{n=\square}$ to be $1$ when $n=\square$ and $0$ otherwise. For the function $\Phi$ described in Section \ref{sec 2'},  we associate its Mellin transform ${\widehat \Phi}(s)$ for any complex number $s$ by
\begin{equation*}
%%\label{Phicheck}
{\widehat \Phi}(s) = \int_{0}^{\infty} \Phi(x)x^{s}\frac {dx}{x}.
\end{equation*}

  We first recall the following two lemmas that are given in \cite{Gao2021-2} as Lemma 2.2 and 2.3 there, respectively.
\begin{lemma}
\label{RS} Let $x \geq 2$. We have, for some constant $b$,
$$
\sum_{p\le x} \frac{1}{p} = \log \log x + b+ O\Big(\frac{1}{\log x}\Big).
$$
 Also, for any integer $j \geq 1$, we have
$$
\sum_{p\le x} \frac {(\log p)^j}{p} = \frac {(\log x)^j}{j} + O((\log x)^{j-1}).
$$
\end{lemma}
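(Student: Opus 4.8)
The plan is to prove the two estimates separately, both by partial summation starting from the prime number theorem with classical error term. For the second statement, which will be convenient to establish first, I would recall the Chebyshev-type estimate $\sum_{p \le x} \log p = x + O(x/\log x)$, or more precisely work with $\psi(x) = \sum_{n \le x} \Lambda(n) = x + O(x e^{-c\sqrt{\log x}})$ and discard the prime-power terms, which contribute only $O(\sqrt{x}(\log x)^2)$ to any of the sums under consideration. Writing $\theta(x) = \sum_{p \le x} \log p$, one then has, for fixed $j \ge 1$,
\begin{align*}
  \sum_{p \le x} \frac{(\log p)^j}{p} = \int_{2^-}^{x} \frac{(\log t)^{j-1}}{t}\, d\theta(t) = \int_2^x \frac{(\log t)^{j-1}}{t}\, dt + \int_{2^-}^x \frac{(\log t)^{j-1}}{t}\, d\bigl(\theta(t) - t\bigr).
\end{align*}
The main term evaluates to $(\log x)^j/j + O(1)$. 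For the error term I would integrate by parts once more, moving the differential off $\theta(t) - t$; the boundary term is $O((\log x)^{j-1})$ since $\theta(x) - x \ll x/\log x$, and the remaining integral $\int_2^x (\theta(t) - t) \frac{d}{dt}\bigl(\tfrac{(\log t)^{j-1}}{t}\bigr)\, dt$ is also $O((\log x)^{j-1})$ because $\frac{d}{dt}\bigl(\tfrac{(\log t)^{j-1}}{t}\bigr) \ll \tfrac{(\log t)^{j-1}}{t^2}$ and $\theta(t) - t \ll t/\log t$, so the integrand is $\ll (\log t)^{j-2}/t$, whose integral up to $x$ is $\ll (\log x)^{j-1}$. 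This gives the claimed $\sum_{p\le x}(\log p)^j/p = (\log x)^j/j + O((\log x)^{j-1})$.

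For the first statement, the cleanest route is again partial summation, this time against $d\theta(t)$ with weight $1/(t\log t)$:
\begin{align*}
  \sum_{p \le x} \frac{1}{p} = \int_{2^-}^x \frac{1}{t \log t}\, d\theta(t) = \int_2^x \frac{dt}{t\log t} + \int_{2^-}^x \frac{1}{t\log t}\, d\bigl(\theta(t)-t\bigr).
\end{align*}
The main term is $\log\log x - \log\log 2$. For the error term, integrating by parts and using $\theta(t) - t \ll t e^{-c\sqrt{\log t}}$ (the strong PNT bound, needed here because the naive $t/\log t$ bound is not quite enough to get the $O(1/\log x)$ rather than merely $o(1)$) shows the boundary contribution at $x$ is $\ll e^{-c\sqrt{\log x}} \ll 1/\log x$ and the residual integral is likewise $\ll 1/\log x$; absorbing the constant $-\log\log 2$ together with the limiting value of the error integral at infinity into a single constant $b$ yields the Mertens-type formula $\sum_{p \le x} 1/p = \log\log x + b + O(1/\log x)$.

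The only genuinely delicate point is the first estimate: securing the error term $O(1/\log x)$ rather than a weaker $o(1)$ requires invoking the prime number theorem with the classical $\exp(-c\sqrt{\log x})$ error term (or, alternatively, quoting Mertens' theorem directly, which is elementary and already gives exactly this shape). The second estimate is comparatively soft, needing only Chebyshev-quality bounds on $\theta(x) - x$, since there one is content with a relative error of size $1/\log x$ against a main term that is a power of $\log x$. Both statements are entirely standard, so in the write-up I would simply cite a standard reference (e.g., a textbook on analytic number theory) for the prime number theorem with error term and carry out the partial summations as above.
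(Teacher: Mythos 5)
The paper does not prove this lemma: it simply cites it as Lemmas 2.2 and 2.3 of \cite{Gao2021-2}, where both statements are treated as standard facts. So there is no internal argument to compare against; I can only assess your write-up on its own terms, and it is correct. Both partial summations against $d\theta(t)$ are set up and estimated properly, and the resulting bounds do give the claimed asymptotics.

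One remark for accuracy rather than validity: your parenthetical claim that the exponential error term in the prime number theorem is \emph{needed} for the $O(1/\log x)$ in the first estimate --- that $\theta(t)-t\ll t/\log t$ would only yield $o(1)$ --- is mistaken. Already with $\theta(t)-t\ll t/\log t$, integration by parts leaves a boundary term $\ll 1/(\log x)^{2}$ and a residual integrand of size $\ll (\theta(t)-t)\cdot\frac{1}{t^{2}\log t}\ll \frac{1}{t(\log t)^{2}}$; this is absolutely integrable on $[2,\infty)$, and the tail $\int_x^\infty \frac{dt}{t(\log t)^2}$ is exactly $1/\log x$, so the $O(1/\log x)$ is recovered with no need for $\exp(-c\sqrt{\log x})$ savings. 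In fact Mertens' second theorem with this error term is a fully elementary 1874 result, predating the prime number theorem, derivable directly from $\sum_{p\le x}(\log p)/p=\log x+O(1)$; and the second estimate of the lemma also follows elementarily by partial summation against that same sum, with no reference to $\theta(t)-t$ at all. So the prime number theorem --- with or without a strong error term --- is heavier machinery than the lemma requires. This is a matter of economy and framing, not correctness: what you wrote is a valid proof.
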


\begin{lemma} \label{PropDirpoly}  For large $X$ and any odd positive integer $n$, we have
\begin{align}
\label{meancharsum}
\sumstar_{\substack{(d,2)=1}} \chi_{8d}(n) \Phi\Big(\frac{d}{X}\Big)=
\displaystyle \delta_{n=\square}{\widehat \Phi}(1) \frac{2X}{3\zeta(2)} \prod_{p|n} \Big(\frac p{p+1}\Big) + O(X^{\frac 12+\epsilon} \sqrt{n} ).
\end{align}
\end{lemma}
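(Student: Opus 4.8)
The plan is to compute this average directly, by removing the square-free condition with M\"obius inversion and then evaluating the resulting smooth sum over $b$ by Mellin inversion; this is the standard route for first moments of quadratic characters. Writing $\mu^{2}(d)=\sum_{a^{2}\mid d}\mu(a)$ and noting that $d$ odd forces $a$ odd, I would set $d=a^{2}b$ to obtain
\[
\sumstar_{(d,2)=1}\chi_{8d}(n)\,\Phi\Big(\frac dX\Big)=\sum_{(a,2)=1}\mu(a)\sum_{(b,2)=1}\chi_{8a^{2}b}(n)\,\Phi\Big(\frac{a^{2}b}{X}\Big).
\]
Since $n$ is odd, $\chi_{8d}(n)=\leg{8d}{n}$ is the Jacobi symbol, which is completely multiplicative in its numerator, so $\chi_{8a^{2}b}(n)=\leg{a}{n}^{2}\chi_{8b}(n)=\mathbf{1}_{(a,n)=1}\,\chi_{8b}(n)$. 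Thus the $a$-sum may be restricted to $(a,2n)=1$, and the problem reduces to evaluating $\sum_{(b,2)=1}\chi_{8b}(n)\Phi(a^{2}b/X)$.

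For the $b$-sum I would use $\Phi(y)=\frac1{2\pi i}\int_{(2)}\widehat\Phi(s)y^{-s}\,ds$, so that it equals $\frac1{2\pi i}\int_{(2)}\widehat\Phi(s)(X/a^{2})^{s}D_{n}(s)\,ds$ with $D_{n}(s)=\sum_{(b,2)=1}\chi_{8b}(n)b^{-s}$, and then split into two cases. If $n=\square$, then $\chi_{8b}(n)=\mathbf{1}_{(b,n)=1}$, hence $D_{n}(s)=\zeta(s)\prod_{p\mid 2n}(1-p^{-s})$ has a single simple pole at $s=1$; moving the contour to $\Re s=\tfrac12+\epsilon$ (legitimate because $\widehat\Phi$ is entire and decays faster than any polynomial on vertical lines), the residue at $s=1$ contributes $\widehat\Phi(1)(X/a^{2})\prod_{p\mid 2n}(1-\tfrac1p)$. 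Summing $\mu(a)$ over $(a,2n)=1$, the main term becomes
\[
\widehat\Phi(1)X\prod_{p\mid 2n}\Big(1-\tfrac1p\Big)\sum_{(a,2n)=1}\frac{\mu(a)}{a^{2}}=\frac{\widehat\Phi(1)X}{\zeta(2)}\prod_{p\mid 2n}\frac{p}{p+1}=\frac{2\widehat\Phi(1)X}{3\zeta(2)}\prod_{p\mid n}\frac{p}{p+1},
\]
the factor $2/3$ being the contribution of the forced prime $p=2$; this matches the claimed main term. If $n\neq\square$, then $\chi_{8b}(n)=\leg{8}{n}\leg{b}{n}$ for odd $b$ and $b\mapsto\leg{b}{n}\mathbf{1}_{(b,2)=1}$ is a Dirichlet character $\psi_{n}$ modulo $2n$ that is non-principal precisely because $n$ is not a perfect square, so $D_{n}(s)=\pm L(s,\psi_{n})$ is entire and there is no residue.

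It then remains to bound the shifted integrals and sum over $a$. On the line $\Re s=\tfrac12+\epsilon$ one has $\prod_{p\mid 2n}|1-p^{-s}|\ll n^{\epsilon}$ in the square case, while in the non-square case P\'olya--Vinogradov together with partial summation (or the convexity bound) gives $|L(\tfrac12+\epsilon+it,\psi_{n})|\ll n^{1/2+\epsilon}(1+|t|)$; integrating against $\widehat\Phi$ and summing $(X/a^{2})^{1/2+\epsilon}$ over $a\ge1$, which converges, produces a total error of size $O(X^{1/2+\epsilon}\sqrt n)$. Combining this with the main term from the square case gives the stated formula.

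The only step that is not entirely mechanical is the character-theoretic input above: one must verify that, for odd $n$, the arithmetic function $b\mapsto\chi_{8b}(n)$ coincides up to the fixed sign $\leg{8}{n}\in\{\pm1\}$ with a Dirichlet character modulo $2n$, and that this character is non-principal exactly when $n$ is not a perfect square --- equivalently, that $\sum_{1\le b\le 2n,\,(b,2)=1}\chi_{8b}(n)=0$ in that case. This is what makes $D_{n}(s)$ entire for $n\neq\square$ and hence forces the indicator $\delta_{n=\square}$ into the answer; the rest is a contour shift plus the elementary bound for $L(s,\psi_{n})$ on the critical line, which is what produces the (rather weak) $\sqrt n$ in the error term. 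Since this error is only needed for $n$ much smaller than $X$, no sharper estimate is necessary.
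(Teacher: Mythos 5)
The paper does not prove this lemma itself but cites it verbatim from an earlier paper of the author (as Lemma~2.3 of \cite{Gao2021-2}), so there is no in-text proof to compare against. Your argument --- M\"obius inversion $\mu^{2}(d)=\sum_{a^{2}\mid d}\mu(a)$ with $d=a^{2}b$, the identity $\chi_{8a^{2}b}(n)=\mathbf{1}_{(a,n)=1}\chi_{8b}(n)$ for odd $n$, Mellin inversion of $\Phi$ to write the $b$-sum as $\frac1{2\pi i}\int_{(2)}\widehat\Phi(s)(X/a^{2})^{s}D_{n}(s)\,ds$, and then distinguishing $D_{n}(s)=\zeta(s)\prod_{p\mid 2n}(1-p^{-s})$ (pole at $s=1$, residue giving the main term) from $D_{n}(s)=\leg{8}{n}L(s,\psi_{n})$ with $\psi_{n}$ a non-principal character mod $2n$ (no pole) --- is exactly the standard route for such square-free character sums and is correct. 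The residue computation $\widehat\Phi(1)X\prod_{p\mid 2n}(1-\tfrac1p)\cdot\zeta(2)^{-1}\prod_{p\mid 2n}(1-p^{-2})^{-1}=\frac{2\widehat\Phi(1)X}{3\zeta(2)}\prod_{p\mid n}\frac{p}{p+1}$ matches the stated main term, the convergence of $\sum_{a}a^{-2}$ (main term) and $\sum_{a}a^{-1-2\epsilon}$ (error term) is clean, and P\'olya--Vinogradov with partial summation gives $L(\tfrac12+\epsilon+it,\psi_{n})\ll n^{1/2+\epsilon}(1+|t|)$, which together with the rapid decay of $\widehat\Phi$ yields the claimed $O(X^{1/2+\epsilon}\sqrt n)$; the same contour shift in the square case costs only $\zeta(\tfrac12+\epsilon+it)\,2^{\omega(2n)}\ll n^{\epsilon}(1+|t|)^{O(1)}$, which is even smaller. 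No gap.
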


  Next, similar to \cite[Lemma 2.4]{Gao2021-2}, by setting $Y=X^{1/4}, M=1$ in \cite[Proposition 1.1-1.2]{sound1}, we have the following asymptotic formula concerning the twisted first moment of quadratic Dirichlet $L$-functions.
\begin{lemma}
\label{Prop1}
  With notations as above and writing any odd $l$ as  $l=l_1l^2_2$ with $l_1$ square-free, we have for any $\varepsilon>0$,
\begin{align*}
%%\label{eq:2ndmoment}
\begin{split}
 \sumstar_{(d,2)=1}L(\half, \chi_{8d})\chi_{8d}(l)\Phi(\frac dX)
=&  \frac{C \widehat{\Phi}(1)}{\zeta(2)}\frac {1}{\sqrt{l_1}g(l)}X \Big (\log \frac {\sqrt{X}}{l_1}+C_2+\sum_{\substack{p | l}} \frac {C_2(p)}{p}\log p \Big )+O \left(X^{\frac 34+\varepsilon}l^{\half+\varepsilon}\right ),
\end{split}
\end{align*}
  where  $C=\frac 13 \displaystyle \prod_{\substack{p \geq 3}}\left (1-\frac 1{p(p+1)} \right )$ and $g(l)=\displaystyle \prod_{p | l}\Big ( \frac {p+1}{p} \Big )\left (1-\frac 1{p(p+1)} \right ).$
  Also, $C_2$ is a constant depending only on $\Phi$ and $C_2(p) \ll 1$ for all $p$.
\end{lemma}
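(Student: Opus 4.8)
\textbf{Proof proposal for Lemma \ref{Prop1}.}

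The plan is to deduce this twisted first moment asymptotic from the work of Soundararajan in \cite{sound1}, specifically from \cite[Proposition 1.1 and Proposition 1.2]{sound1}, by setting the free parameters there to the values $Y=X^{1/4}$ and $M=1$. In \cite{sound1}, the smoothed first moment $\sumstar_{(d,2)=1}L(\half,\chi_{8d})\chi_{8d}(l)\Phi(d/X)$ is opened up via the approximate functional equation for $L(\half,\chi_{8d})$, which expresses this central value as a sum of two Dirichlet series of length roughly $\sqrt{8dl}$ (the two pieces being interchanged by the functional equation, and equal because the sign of the functional equation is $+1$ for this family). The parameter $Y$ governs where one truncates; the parameter $M$ controls the length of an auxiliary Dirichlet polynomial used to handle the square-free condition via a M\"obius-type sieve. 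Interchanging the sum over $n$ (the variable in the approximate functional equation) with the sum over $d$ brings in the character sum $\sumstar_{(d,2)=1}\chi_{8d}(nl)\Phi(d/X)$, which is evaluated by Lemma \ref{PropDirpoly}: the main term comes from $nl=\square$, i.e. from $n$ running over integers such that $nl_1$ is a square, and the error term is acceptable.

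The key steps, in order, are as follows. First, I would record the relevant statements of \cite[Propositions 1.1--1.2]{sound1} in the normalization used there and verify that the hypotheses are met with $Y=X^{1/4}$, $M=1$, noting that the power-saving error term in those propositions is of the shape $X^{3/4+\varepsilon}l^{1/2+\varepsilon}$ after inserting these parameter choices (this is where the $X^{3/4}$ in the claimed error comes from; the $l^{1/2+\varepsilon}$ tracks the dependence on the length of the twisting modulus through the approximate functional equation and Lemma \ref{PropDirpoly}). Second, I would extract the main term: after the diagonal $nl=\square$ is isolated, one is left with a sum over squares weighted by $\V$-type cutoff functions and by the arithmetic factor $\prod_{p\mid n}(p/(p+1))$ coming from Lemma \ref{PropDirpoly}; evaluating this sum by contour integration (moving the Mellin variable, picking up the pole of the relevant zeta factor at $s=1$ which produces the single logarithm $\log(\sqrt X/l_1)$, and collecting the constant term as $C_2$ and the ramified local corrections as $\sum_{p\mid l}(C_2(p)/p)\log p$) yields precisely the stated main term with the arithmetic constant $C$ and multiplicative function $g(l)$ and with $C_2(p)\ll 1$ uniformly. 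Third, I would check the decomposition $l=l_1l_2^2$ with $l_1$ square-free is respected throughout: only the square-free part $l_1$ survives in the diagonal (since $nl=\square$ forces $n$ to be $l_1$ times a square up to square factors), which explains the $1/\sqrt{l_1}$ and the $1/g(l)$ structure, the latter being the Euler product correction that depends on the full radical of $l$.

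The main obstacle is bookkeeping rather than conceptual: one must carefully track the uniformity in $l$ through every step — the approximate functional equation, the interchange of summation, the application of Lemma \ref{PropDirpoly} (whose error term $X^{1/2+\varepsilon}\sqrt{n}$ must be summed against the $n$-sum of length $\ll \sqrt{X l}$, producing the $X^{3/4+\varepsilon}l^{1/2+\varepsilon}$ claimed), and the contour shift — to be sure no step secretly loses a power of $l$ beyond $l^{1/2+\varepsilon}$ or a power of $X$ beyond $X^{3/4+\varepsilon}$. A secondary point requiring care is confirming that \cite[Propositions 1.1--1.2]{sound1}, which are stated for a slightly different (but essentially equivalent) smoothing and possibly for $L(\half,\chi_{8d})$ without the extra twist $\chi_{8d}(l)$, genuinely deliver the twisted version after the routine modification of inserting $\chi_{8d}(l)$ into the character sum; since \cite[Lemma 2.4]{Gao2021-2} is asserted in the excerpt to have been obtained by exactly this route, I would follow that derivation essentially verbatim, adjusting only the value of $Y$ from whatever was used there to $X^{1/4}$ and keeping $M=1$, and then simplifying the resulting main term into the closed form displayed in the statement.
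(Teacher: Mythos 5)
Your proposal follows exactly the route the paper takes: the paper gives no independent proof of this lemma but simply invokes \cite[Propositions 1.1--1.2]{sound1} with the parameter choices $Y=X^{1/4}$, $M=1$, citing the parallel derivation of \cite[Lemma 2.4]{Gao2021-2}. Your additional outline of what lies inside Soundararajan's propositions (approximate functional equation, diagonal extraction via the character-sum lemma, contour shift for the main term, and the uniformity in $l$ giving the $X^{3/4+\varepsilon}l^{1/2+\varepsilon}$ error) is consistent with that source and with the paper's intended justification.
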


  Lastly, we present a result which is analogue to \cite[Lemma 1]{H&Sound} and is needed in the proof of Proposition \ref{Prop5}.
\begin{lemma}
\label{lem1}
 For $1 \le j\le R$, we have
$$
{\mathcal N}_j(d, 2k-1)^{\frac {2(2-3k)}{1-2k}} {\mathcal N}_j(d, 2-2k)^{2} \le
{\mathcal N}_j(d, 2k) \Big(1+ O\big(e^{-\ell_j} \big ) \Big) + {\mathcal Q}_j(d),
$$
where the implied constants are absolute, and
$$
{\mathcal Q}_j(d) =\Big( \frac{24 {\mathcal P}_j(d)}{\ell_j} \Big)^{2r_k\ell_j},
$$
  with $r_k=1+(2-3k)/(1-2k)$.
\end{lemma}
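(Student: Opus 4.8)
The plan is to establish the stated bound for each fixed $d$, splitting according to the size of $|{\mathcal P}_j(d)|$ relative to $\ell_j$. Throughout write $\mathcal{P}={\mathcal P}_j(d)$, $\ell=\ell_j$, and set $a=\frac{2(2-3k)}{1-2k}$, $b=2$; since $0<k<\tfrac12$ one has $a>4$, and a direct computation gives the two identities $a(2k-1)+b(2-2k)=2k$ and $a+b=2r_k$. I would first record two elementary facts about $E_\ell$. First, for every real $y$ we have $E_\ell(y)\le E_\ell(|y|)$, and for every $w\ge 0$ we have $E_\ell(w)\le (1+ew/\ell)^\ell$ — the latter by comparing Taylor coefficients, i.e.\ from $\binom{\ell}{j}(e/\ell)^j\ge 1/j!$ for $0\le j\le \ell$ (equivalently $j!\binom{\ell}{j}\ge (\ell/e)^j$). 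Second, there is an absolute constant $c_0$, for which $c_0=10$ works, such that $E_\ell(y)=e^{y}\bigl(1+O(e^{-\ell})\bigr)$ whenever $|y|\le \ell/c_0$; this is the familiar tail estimate $\bigl|\sum_{j>\ell}y^{j}/j!\bigr|\ll (e|y|/\ell)^{\ell}$.

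\emph{Small case: $|{\mathcal P}_j(d)|\le \ell_j/(2c_0)$.} Since $|2k-1|,|2k|,|2-2k|\le 2$, each of $(2k-1)\mathcal{P}$, $(2-2k)\mathcal{P}$, $2k\mathcal{P}$ has modulus at most $\ell/c_0$, so the second fact applies to ${\mathcal N}_j(d,2k-1)$, ${\mathcal N}_j(d,2-2k)$ and ${\mathcal N}_j(d,2k)$. Taking logarithms and invoking $a(2k-1)+b(2-2k)=2k$ gives $\log\bigl({\mathcal N}_j(d,2k-1)^{a}{\mathcal N}_j(d,2-2k)^{b}\bigr)=a\bigl((2k-1)\mathcal{P}+O(e^{-\ell})\bigr)+b\bigl((2-2k)\mathcal{P}+O(e^{-\ell})\bigr)=2k\mathcal{P}+O(e^{-\ell})=\log{\mathcal N}_j(d,2k)+O(e^{-\ell})$, hence ${\mathcal N}_j(d,2k-1)^{a}{\mathcal N}_j(d,2-2k)^{b}={\mathcal N}_j(d,2k)\bigl(1+O(e^{-\ell_j})\bigr)$; as ${\mathcal Q}_j(d)\ge 0$ and ${\mathcal N}_j(d,2k)>0$, the claimed inequality follows. (Using that all $\ell_j>10^{M}$ with $M$ large in terms of $k$ keeps the implied constants absolute.)

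\emph{Large case: $|{\mathcal P}_j(d)|> \ell_j/(2c_0)$.} Here I would show that ${\mathcal Q}_j(d)$ alone majorises the left side. By the first fact, with $v=|\mathcal{P}|/\ell>1/(2c_0)$, one has ${\mathcal N}_j(d,2k-1)\le E_\ell\bigl((1-2k)|\mathcal{P}|\bigr)\le (1+e(1-2k)v)^{\ell}$ and ${\mathcal N}_j(d,2-2k)\le (1+e(2-2k)v)^{\ell}$; since $a+b=2r_k$, the bound ${\mathcal N}_j(d,2k-1)^{a}{\mathcal N}_j(d,2-2k)^{b}\le (24v)^{2r_k\ell}$ — which is ${\mathcal Q}_j(d)$ once $|{\mathcal P}_j(d)|$ is read in its base — follows from the scalar inequality
\[
\Bigl(\tfrac{1+e(1-2k)v}{24v}\Bigr)^{a}\Bigl(\tfrac{1+e(2-2k)v}{24v}\Bigr)^{b}\le 1\qquad (v\ge \tfrac1{2c_0},\ 0<k<\tfrac12).
\]
Writing $H(v)$ for the logarithm of the left side, $vH'(v)=-\tfrac{a}{1+e(1-2k)v}-\tfrac{b}{1+e(2-2k)v}<0$, so $H$ is strictly decreasing and it suffices to verify the inequality at $v=\tfrac1{20}$, where (after clearing denominators) it reads $a\log\tfrac{24}{20+e(1-2k)}\ge 2\log\tfrac{20+2e(1-k)}{24}$. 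The left side is at least $4\log\tfrac{24}{20+e}\approx 0.22$ and the right side is at most $2\log\tfrac{20+2e}{24}\approx 0.12$, so this holds with room to spare.

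The step I expect to be the main obstacle is precisely the scalar inequality in the large case. Near $v\approx\tfrac1{20}$ and for $k$ near $0$ the factor $\tfrac{1+e(2-2k)v}{24v}$ exceeds $1$, so one cannot argue factor by factor; the inequality survives only because the companion factor $\tfrac{1+e(1-2k)v}{24v}<1$ appears with the exponent $a$, and $a\ge 4$ is just large enough to compensate. Thus the hypothesis $0<k<\tfrac12$ (equivalently $a\ge 4$) enters here in an essential way, and one must also choose the threshold so that the small-case range $|{\mathcal P}_j(d)|\le \ell_j/(2c_0)$ and the large-case range $v\ge 1/(2c_0)$ dovetail — the choice $c_0=10$ does both. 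The remaining pieces (the two $E_\ell$ estimates and the monotonicity of $H$) are routine.
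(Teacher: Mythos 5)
Your proof is correct, and in the regime $|{\mathcal P}_j(d)|\le \ell_j/20$ it tracks the paper's argument closely: both use the tail estimate $E_\ell(y)=e^y\bigl(1+O(e^{-\ell})\bigr)$ for $|y|\le \ell/10$, and the algebraic identity $a(2k-1)+b(2-2k)=2k$ with $a=\tfrac{2(2-3k)}{1-2k}$, $b=2$. (A small point: the paper records the error for ${\mathcal N}_j(d,2k-1)$ as $O\bigl((1-2k)e^{-\ell_j}\bigr)$; the extra $(1-2k)$ is what keeps the constant absolute after raising to the $k$-dependent power $a=O\bigl(1/(1-2k)\bigr)$, since $a(1-2k)=2(2-3k)\le 4$. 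Your $O_k(e^{-\ell_j})$ is fine for the application, where $k$ is fixed, but not literally for the ``absolute'' constants asserted in the statement.)

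In the regime $|{\mathcal P}_j(d)|\ge \ell_j/20$ you take a genuinely different route. The paper bounds each factor directly: using $|{\mathcal P}_j(d)|\ge\ell_j/20$ and $|2-2k|,|2k-1|\le 2$, one gets $\bigl|{\mathcal N}_j(d,\alpha)\bigr|\le \bigl(24|{\mathcal P}_j(d)|/\ell_j\bigr)^{\ell_j}$ for $\alpha\in\{2k-1,\,2-2k\}$ by weighting each term of the partial sum by $(20/\ell_j)^{\ell_j-r}\ge |{\mathcal P}_j(d)|^{r-\ell_j}$; raising to the powers $a,b$ and using $a+b=2r_k$ then yields ${\mathcal Q}_j(d)$ with no further work. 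You instead invoke the universal bound $E_\ell(w)\le (1+ew/\ell)^\ell$ and are then forced to verify the scalar inequality $\bigl(\tfrac{1+e(1-2k)v}{24v}\bigr)^{a}\bigl(\tfrac{1+e(2-2k)v}{24v}\bigr)^{2}\le 1$ for $v\ge 1/20$, which (as you correctly flag) degenerates near $k=0$, where the second factor exceeds $1$ and must be beaten by the first factor's exponent $a\ge 4$; your monotonicity argument and the numerical check at $v=1/20$, $k=0$ are correct, but this is considerably more delicate than what is needed. The paper's bound is both tighter and structurally simpler precisely because it is tailored to the range $|{\mathcal P}_j(d)|\ge\ell_j/20$ rather than valid for all $w\ge 0$, so no factor-versus-factor compensation has to be negotiated.
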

\begin{proof}  We note first that when $|z| \le aK/20$ for any constant $0<a \leq 2$, we have
$$
\Big| \sum_{r=0}^K \frac{z^r}{r!} - e^z \Big| \le \frac{|az|^{K}}{K!} \le \Big(\frac{a e}{20}\Big)^{K}.
$$
  We deduce from this that when $|{\mathcal P}_j(d)| \le \ell_j/20$,
\begin{align*}
{\mathcal N}_j(d, 2k-1)=& \exp( (2k-1) {\mathcal P}_j(d))\Big( 1+  O\Big(\exp( (2k-1) |{\mathcal P}_j(d)|)\Big(\frac{(1-2k) e}{20}\Big)^{\ell_j} \Big ) \\
= & \exp( (2k-1) {\mathcal P}_j(d))\Big( 1+  O\Big((1-2k)e^{-\ell_j} \Big )\Big ).
\end{align*}
  Similarly, we have
\begin{align*}
{\mathcal N}_j(d, 2-2k)= & \exp( (2-2k) {\mathcal P}_j(d))\Big( 1+  O\Big(e^{-\ell_j} \Big ) \Big ).
\end{align*}
 The above estimations then allow us to see that when $|{\mathcal P}_j(d)| \le \ell_j/20$,
\begin{align}
\label{est1}
{\mathcal N}_j(d, 2k-1)^{\frac {2(2-3k)}{1-2k}} {\mathcal N}_j(d, 2-2k)^{2}
&= \exp( 2k {\mathcal P}_j(d))\Big( 1+ O\big( e^{-\ell_j} \big) \Big) = {\mathcal N}_j(d, 2k) \Big( 1+ O\big(e^{-\ell_j} \big) \Big).
\end{align}

 Next, notice that when $|{\mathcal P}_j(d)| \ge \ell_j/20$, we have
\begin{align}
\label{4.2}
\begin{split}
|{\mathcal N}_j(d, 2-2k)| &\le \sum_{r=0}^{\ell_j} \frac{|2{\mathcal P}_j(d)|^r}{r!} \le
|{\mathcal P}_j(d)|^{\ell_j} \sum_{r=0}^{\ell_j} \Big( \frac{20}{\ell_j}\Big)^{\ell_j-r} \frac{2^r}{r!}   \le \Big( \frac{24 |{\mathcal P}_j(d)|}{\ell_j}\Big)^{\ell_j} .
\end{split}
\end{align}
  The assertion of the lemma now follows from \eqref{est1}, \eqref{4.2} together with the observation that the same bound above holds for $|{\mathcal N}_j(d, 2k-1)|$ as well.
\end{proof}

\section{Proof of Proposition \ref{Prop4}}
\label{sec 4}

   Denote $\Omega(n)$ for the number of distinct prime powers dividing $n$ and $w(n)$ for the multiplicative function such that $w(p^{\alpha}) = \alpha!$ for prime powers $p^{\alpha}$.  Let $b_j(n), 1 \leq j \leq R$ be functions such that $b_j(n)=1$ when $n$ is composed of at most $\ell_j$ primes, all from the interval $P_j$. Otherwise, we define $b_j(n)=0$. We use these notations to see that
\begin{equation}
\label{5.1}
{\mathcal N}_j(d, 2k-1) = \sum_{n_j} \frac{1}{\sqrt{n_j}} \frac{(2k-1)^{\Omega(n_j)}}{w(n_j)}  b_j(n_j) \chi_{8d}(n_j), \quad 1\le j\le R.
\end{equation}
    Note that each ${\mathcal N}_j(d, 2k-1)$ is a short Dirichlet polynomial since $b_j(n_j)=0$ unless $n_j \leq (X^{1/\ell_j^2})^{\ell_j}=X^{1/\ell_j}$. It follows from this that  ${\mathcal N}(d, 2k-1)$ is also a short Dirichlet polynomial of length at most $X^{1/\ell_1+ \ldots +1/\ell_R} < X^{2/10^{M}}$ by \eqref{sumoverell}.

 We use \eqref{5.1} to expand the term ${\mathcal N}(d, 2k-1)$ and apply Lemma  \ref{PropDirpoly} to evaluate it. As ${\mathcal N}(d, 2k-1)$ is a short Dirichlet polynomial, we may ignore the error term in Lemma \ref{PropDirpoly} to consider only the main term contribution. Upon writing $n_j=(n_j)_1(n_j)_{2}^2$ with $(n_j)_{1}$ being square-free, we see that
\begin{align*}
 & \sumstar_{(d,2)=1} L(\frac 12,\chi_{8d}){\mathcal N}(d, 2k-1)\Phi\Big(\frac{d}{X}\Big) \\
 \gg & X   \sum_{n_1, \cdots, n_{R} }  \Big( \prod_{j=1}^{R} \frac{1}{\sqrt{n_j(n_j)_{1}}}
\frac{(2k-1)^{\Omega(n_j)}}{w(n_j) } b_j(n_j) \frac {1}{g(n_{j})}  \Big) \Big (\log \Big ( \frac {\sqrt{X}}{(n_1)_1 \cdots (n_{R})_1} \Big )+C_2+\sum_{\substack{p | n_1 \cdots n_{R} }} \frac {C_2(p)}{p}\log p \Big ).
\end{align*}
  We may further ignore the contribution from the terms involving $C_2+\sum_{\substack{p | n_1 \cdots n_{R} }} \frac {C_2(p)}{p}\log p$ as one may show using our arguments in the paper that the contribution is $\ll  X (\log X)^{ \frac {(2k)^2+1}{2}-1}$. Thus we deduce that
\begin{align*}
  \sumstar_{(d,2)=1} L(\frac 12,\chi_{8d}){\mathcal N}(d, 2k-1)\Phi\Big(\frac{d}{X}\Big)
\gg & X   \sum_{n_1, \cdots, n_{R} }  \Big( \prod_{j=1}^{R} \frac{1}{\sqrt{n_j(n_j)_{1}}}
\frac{(2k-1)^{\Omega(n_j)}}{w(n_j) } b_j(n_j) \frac {1}{g(n_{j})}  \Big) \Big (\log \Big ( \frac {\sqrt{X}}{(n_1)_1 \cdots (n_{R})_1} \Big )\Big ) \\
=& S_1-S_2,
\end{align*}
   where
\begin{align*}
 S_1= & \frac 12 X \log X   \sum_{n_1, \cdots, n_{R} }  \Big( \prod_{j=1}^{R} \frac{1}{\sqrt{n_j(n_j)_{1}}}
\frac{(2k-1)^{\Omega(n_j)}}{w(n_j) } b_j(n_j) \frac {1}{g(n_{j})}  \Big),  \\
 S_2= &  X   \sum_{n_1, \cdots, n_{R} }  \Big( \prod_{j=1}^{R} \frac{1}{\sqrt{n_j(n_j)_{1}}}
\frac{(2k-1)^{\Omega(n_j)}}{w(n_j) } b_j(n_j) \frac {1}{g(n_{j})}  \Big) \log \big ( \prod^R_{i=1} (n_i)_1  \big ).
\end{align*}
   It remains to bound $S_1$ from below and $S_2$ from above. We bound $S_1$ first by recasting it as
\begin{align*}
 S_1= & \frac 12 X \log X   \prod_{j=1}^{R} \sum_{n_j }  \Big (\frac{1}{\sqrt{n_j(n_j)_{1}}}
\frac{(2k-1)^{\Omega(n_j)}}{w(n_j) } b_j(n_j) \frac {1}{g(n_{j})}  \Big).
\end{align*}
  We consider the sum above over $n_j$ for a fixed $1 \leq j \leq R$. Note that the factor $b_j(n_j)$ restricts $n_j$ to have all prime factors in $P_j$ such that $\Omega(n_j) \leq \ell_j$. If we remove the restriction on $\Omega(n_j)$, then the sum becomes
\begin{align}
\label{6.02}
\begin{split}
& \prod_{\substack{p\in P_j }} \Big( \sum_{i=0}^{\infty} \frac{1}{p^i} \frac{(2k-1)^{2i}}{(2i)!g(p^{2i})} + \sum_{i=0}^{\infty} \frac{1}{p^{i+1}}
\frac{(2k-1)^{2i+1}}{(2i+1)!}\frac {1}{g(p^{2i+1})}\Big)
= \prod_{\substack{p\in P_j }}\Big (1- \big(\frac {(2k-1)^{2}}{2}+2k-1 \big )\frac 1p \Big )^{-1} D(p),
\end{split}
\end{align}
  where
\begin{align*}
%%\label{Cp}
\begin{split}
 D(p)=& \Big (1- \big(\frac {(2k-1)^{2}}{2}+2k-1 \big )\frac 1p \Big )\Big( \sum_{i=0}^{\infty} \frac{1}{p^i} \frac{(2k-1)^{2i}}{(2i)!g(p^{2i})} + \sum_{i=0}^{\infty} \frac{1}{p^{i+1}}
\frac{(2k-1)^{2i+1}}{(2i+1)!}\frac {1}{g(p^{2i+1})}\Big).
\end{split}
\end{align*}
  Note that for $i \geq 2$, we have
\begin{align*}
\begin{split}
  \frac{1}{p^i} \frac{(2k-1)^{2i}}{(2i)!g(p^{2i})} + \frac{1}{p^{i+1}}
\frac{(2k-1)^{2i+1}}{(2i+1)!}\frac {1}{g(p^{2i+1})} \geq 0.
\end{split}
\end{align*}
  Observe further the estimation that
\begin{align}
\label{gest}
\begin{split}
   1-\frac 1p \leq \frac 1{g(p)} \leq 1.
\end{split}
\end{align}
  We then deduce that
\begin{align*}
%%\label{Dp}
\begin{split}
 D(p) \geq & \Big (1- \big(\frac {(2k-1)^{2}}{2}+2k-1 \big )\frac 1p \Big )\Big (1+\big(\frac {(2k-1)^{2}}{2}+2k-1 \big )\frac 1{pg(p)}+\frac {(2k-1)^3}{6p^2g(p)} \Big ) \\
 \geq & \Big (1- \big(\frac {(2k-1)^{2}}{2}+2k-1 \big )\frac 1p \Big )\Big (1+\big(\frac {(2k-1)^{2}}{2}+2k-1 \big )\frac 1{p}+\frac {(2k-1)^3}{6p^2} \Big ) \\
 \geq & 1-\big(\frac {(2k-1)^{2}}{2}+2k-1 \big )^2\frac 1{p^2} - \Big (1- \big(\frac {(2k-1)^{2}}{2}+2k-1 \big )\frac 1p \Big )\frac {(1-2k)^3}{6p^2}  \\
 \geq & 1-\frac 1{4p^2}-\frac 5{24p^2},
\end{split}
\end{align*}
  where the last estimation above follows by noting that
$$1- \big(\frac {(2k-1)^{2}}{2}+2k-1 \big )\frac 1p = 1+\frac {1-4k^2}{2p} \leq 1+\frac 1{4}.$$

   We further note that we have $(2k-1)^2/2+2k-1<0$ when $0<k<1/2$ and that $-\log (1+x)>-x$ for all $x>0$. This implies that
\begin{align*}
\begin{split}
&  \prod_{\substack{p\in P_j }}\Big (1- \big(\frac {(2k-1)^{2}}{2}+2k-1 \big )\frac 1p \Big )^{-1}
\geq  \exp \Big ( \big(\frac {(2k-1)^{2}}{2}+2 k-1 \big )\sum_{\substack{p\in P_j}}\frac 1p \Big ).
\end{split}
\end{align*}

   We then deduce from the above estimations that the left side expression in \eqref{6.02} is
\begin{align}
\label{firstlowerbound}
\begin{split}
\geq & \exp \Big ( \big(\frac {(2k-1)^{2}}{2}+2 k-1 \big )\sum_{\substack{p\in P_j}}\frac 1p \Big )\prod_{\substack{p\in P_j }} (1-\frac 1{p^2}).
\end{split}
\end{align}

  On the other hand, using Rankin's trick by noticing that $2^{\Omega(n_1)-\ell_1}\ge 1$ if $\Omega(n_1) > \ell_1$,  we see that the error introduced this way does not exceed
\begin{align*}
%%\label{errorbound}
\begin{split}
 & \sum_{n_j} \frac{1}{\sqrt{n_j(n_{j})_{1}}}
\frac{|2k-1|^{\Omega(n_j)}}{w(n_j) } 2^{\Omega(n_j)-\ell_j} \frac {1}{g(n_{j})} \\
\le & 2^{-\ell_j} \prod_{\substack{p\in P_j }} \Big( 1+\sum_{i=1}^{\infty} \frac{1}{p^i} \frac{(2k-1)^{2i}2^{2i}}{(2i)!} \frac {1}{g(p^{2i})} + \sum_{i=0}^{\infty} \frac{1}{p^{i+1}}
\frac{|2k-1|^{2i+1}2^{2i+1}}{(2i+1)!}\frac {1}{g(p^{2i+1})}\Big) \\
\le & 2^{-\ell_j} \prod_{\substack{p\in P_j }} \Big( 1+\sum_{i=1}^{\infty} \frac{1}{p^i} \frac{(2k-1)^{2i}2^{2i}}{(2i)!}  + \sum_{i=0}^{\infty} \frac{1}{p^{i+1}}
\frac{|2k-1|^{2i+1}2^{2i+1}}{(2i+1)!}\Big) \\
\leq & 2^{-\ell_j}
\exp\Big( \big (2(2k-1)^2+2(1-2k)\big ) \sum_{p\in P_j} \frac{1}{p} +\sum_{p\in P_j} R(p) \Big ),
\end{split}
\end{align*}
 where
\begin{align*}
\begin{split}
 R(p) =& \sum_{i=2}^{\infty} \frac{1}{p^i} \frac{(2k-1)^{2i}2^{2i}}{(2i)!}+\sum_{i=1}^{\infty} \frac{1}{p^{i+1}}
\frac{|2k-1|^{2i+1}2^{2i+1}}{(2i+1)!} \\
\leq & \frac 1{p^2} \sum_{i=2}^{\infty}  \frac{2^{2i}}{(2i)!}+\frac 2{p^2}\sum_{i=1}^{\infty}
\frac{2^{2i}}{(2i+1)!} \\
\leq & \frac 3{p^2}\sum_{i=1}^{\infty}  \frac{2^{2i}}{(2i)!} \leq  \frac 3{p^2}\sum_{i=1}^{\infty}  \frac{2^{i}}{i!} \leq \frac {3e^2}{p^2}.
\end{split}
\end{align*}
  It follows that the error is hence
\begin{align}
\label{errorbound0}
\begin{split}
 \leq & 2^{-\ell_j}
\exp\Big( \big (2(2k-1)^2+2(1-2k)\big ) \sum_{p\in P_j} \frac{1}{p} +\sum_{n \geq 1} \frac {3e^2}{n^2} \Big ) \\
\leq & 2^{-\ell_j}\exp(\frac {(e\pi)^2}{2}) \exp\Big( \big (\frac 32(2k-1)^2+3(1-2k)\big ) \sum_{p\in P_j} \frac{1}{p} \Big ) \times \exp \Big ( \big(\frac {(2k-1)^{2}}{2}+2 k-1 \big )\sum_{p\in P_j} \frac{1}{p} \Big ).
\end{split}
\end{align}

  We notice that the expression given in \eqref{firstlowerbound} satisfies
\begin{align}
\label{lowerbound1}
\begin{split}
\geq & \exp \Big ( \big(\frac {(2k-1)^{2}}{2}+2 k-1 \big )\sum_{\substack{p\in P_j}}\frac 1p \Big )\prod_{\substack{p  }} (1-\frac 1{p^2})=\frac 6{\pi^2}\exp \Big ( \big(\frac {(2k-1)^{2}}{2}+2 k-1 \big )\sum_{\substack{p\in P_j}}\frac 1p \Big ) .
\end{split}
\end{align}
  We may take $N$ large enough so that by Lemma \ref{RS}, we have that for all $1 \leq j \leq R$,
\begin{align}
\label{boundsforsumoverp}
  \frac 1{2N} \ell_j \leq \sum_{p \in P_j}\frac 1{p} \leq \frac 2N \ell_j.
\end{align}
 We may also take $M$ large enough to ensure that every $\ell_j, 1 \leq j \leq R$ is large. We then deduce from \eqref{errorbound0} and \eqref{lowerbound1} that the error introduced above is
\begin{align}
\label{lowerbound2}
\begin{split}
\leq & 2^{-\ell_j/2}\exp \Big ( \big(\frac {(2k-1)^{2}}{2}+2 k-1 \big )\sum_{\substack{p\in P_j}}\frac 1p \Big )\prod_{\substack{p \in P_j }} (1-\frac 1{p^2}).
\end{split}
\end{align}

  Combining \eqref{firstlowerbound} and \eqref{lowerbound2}, we see that the  sum over $n_j$ for each $j$, $1 \leq j \leq R$ in the expression of $S_1$ is
\begin{align*}
\begin{split}
\geq & (1-2^{-\ell_j/2})\exp \Big ( \big(\frac {(2k-1)^{2}}{2}+2 k-1 \big )\sum_{\substack{p\in P_j}}\frac 1p \Big )\prod_{\substack{p \in P_j }} (1-\frac 1{p^2}).
\end{split}
\end{align*}
  It follows from this that we have
\begin{align}
\label{S1bound}
\begin{split}
 S_1 \geq & \frac 12 X\log X \prod^R_{j=1}\Big ( (1-2^{-\ell_j/2})\exp \Big ( \big(\frac {(2k-1)^{2}}{2}+2 k-1 \big )\sum_{\substack{p\in P_j}}\frac 1p \Big )\prod_{\substack{p \in P_j }} (1-\frac 1{p^2}) \Big ) \\
 \geq & \frac 12 X\log X \prod_{\substack{p}} (1-\frac 1{p^2})\prod^R_{j=1}  \Big ( 1-2^{-\ell_j/2} \Big )\prod^R_{j=1} \exp \Big ( \big(\frac {(2k-1)^{2}}{2}+2 k-1 \big )\sum_{\substack{p\in  P_j}}\frac 1p \Big ) \Big ) \\
 \geq & \frac 1{2\zeta(2)} X\log X\Big ( 1- \sum^R_{j=1}   2^{-\ell_j/2} \Big )\prod^R_{j=1} \exp \Big ( \big(\frac {(2k-1)^{2}}{2}+2 k-1 \big )\sum_{\substack{p\in  P_j}}\frac 1p \Big ) \Big ),
\end{split}
\end{align}
 where the last estimation above follows by noting that $\prod^R_{i=1}(1-x_i) \geq 1-\sum^R_{i=1}x_i$ for positive real numbers $x_i$ satisfying $\sum^R_{i=1}x_i \leq 1$.

  Next, we estimate $S_2$ by writing  $\log \big ( \prod^R_{i=1} (n_i)_1  \big )$ as a sum of logarithms of primes dividing $\prod^R_{i=1} (n_i)_1$ to see that
\begin{align}
\label{6.1}
\begin{split}
 S_2 \leq & X \sum_{q \in \bigcup P_j} \sum_{l \geq 0}  \Big( \frac{\log q}{q^{l+1}}\frac{(1-2k)^{2l+1}}{(2l+1)!} \frac {1}{g(q^{2l+1})}\Big)\prod_{i=1}^{R} \Big( \sum_{(n_i, q)=1  } \frac{1}{\sqrt{n_i (n_{i})_{1}}}
\frac{(2k-1)^{\Omega(n_i)}}{w(n_i) } \widetilde{b}_{i, l}(n_i) \frac {1}{g(n_{i})} \Big),
\end{split}
\end{align}
  where we define $\widetilde{b}_{i, l}(n_i)=b_i(n_iq^{l})$ for the unique index $i$ ($1 \leq i \leq R$) such that $b_i(q) \neq 0$ and $\widetilde{b}_{i, l}(n_i)=b_{i}(n_i)$ otherwise.

  As above, if we remove the restriction of $\widetilde{b}_{i, l}$ on $\Omega(n_i)$, then the sum over $n_i$ becomes
\begin{align}
\label{6.2}
\begin{split}
& \prod_{\substack{p \in P_i \\ (p,q)=1}} \Big( \sum_{m=0}^{\infty} \frac{1}{p^m} \frac{(2k-1)^{2m}}{(2m)!g(p^{2m})} + \sum_{m=0}^{\infty} \frac{1}{p^{m+1}}
\frac{(2k-1)^{2m+1}}{(2m+1)!}\frac {1}{g(p^{2m+1})}\Big).
\end{split}
\end{align}
  Note that for $m \geq 1$, we have
\begin{align*}
\begin{split}
  \frac{1}{p^m} \frac{(2k-1)^{2m}}{(2m)!g(p^{2m})} + \frac{1}{p^{m}}
\frac{(2k-1)^{2m-1}}{(2m-1)!}\frac {1}{g(p^{2m-1})} \leq 0.
\end{split}
\end{align*}
  It follows from this that the expression in \eqref{6.2} is
\begin{align}
\label{6.2'}
\begin{split}
 \leq & \prod_{\substack{p \in P_i \\ (p,q)=1}} \Big (1+\big(\frac {(2k-1)^{2}}{2}+2k-1 \big )\frac 1{pg(p)} \Big ) \leq
 \exp \Big ( \big(\frac {(2k-1)^{2}}{2}+2k-1 \big )\sum_{\substack{p \in P_i \\ (p,q)=1}}\frac 1{pg(p)} \Big ),
\end{split}
\end{align}
 where the last estimation above follows from the  observation that $1 + x \leq e^x$ for any real x.

 Using further the estimation given in \eqref{gest}, we deduce from \eqref{6.2'} that the expression in \eqref{6.2} is
\begin{align*}
%%\label{6.2''}
\begin{split}
 \leq & \exp \Big ( \big(\frac {(2k-1)^{2}}{2}+2k-1 \big )\sum_{\substack{p \in P_i \\ (p,q)=1}}\frac 1{p}-  \big(\frac {(2k-1)^{2}}{2}+2k-1 \big )\sum_{\substack{p \in P_i }}\frac 1{p^2} \Big ) \\
 \leq & \exp \Big ( \big(\frac {(2k-1)^{2}}{2}+2k-1 \big )\sum_{\substack{p \in P_i \\ (p,q)=1}}\frac 1{p}+\frac 12\sum_{\substack{p \in P_i }}\frac 1{p^2} \Big ).
\end{split}
\end{align*}
  Similar to our discussions above, we see that the error introduced in this process is
\begin{align*}
\begin{split}
\leq & 2^{-\ell_j/2}\exp \Big ( \big(\frac {(2k-1)^{2}}{2}+2k-1 \big )\sum_{\substack{p \in P_i \\ (p,q)=1}}\frac 1{p} \Big ).
\end{split}
\end{align*}

   We deduce from this that
\begin{align*}
\begin{split}
 & \prod_{i=1}^{R} \Big( \sum_{(n_i, q)=1  } \frac{1}{\sqrt{n_i (n_{i})_{1}}}
\frac{(2k-1)^{\Omega(n_i)}}{w(n_i) } \widetilde{b}_{i, l}(n_i) \frac {1}{g(n_{i})} \Big) \\
\leq & \prod_{i=1}^{R} \big (1+2^{-\ell_j/2} \big )\exp \Big ( \big(\frac {(2k-1)^{2}}{2}+2k-1 \big )\sum_{\substack{p \in P_i \\ (p,q)=1}}\frac 1{p}+\frac 12\sum_{\substack{p \in P_i }}\frac 1{p^2} \Big ) \\
\leq & A \times \prod_{i=1}^{R} \exp \Big ( \big(\frac {(2k-1)^{2}}{2}+2k-1 \big )\sum_{\substack{p \in P_i }}\frac 1{p}\Big ),
\end{split}
\end{align*}
  where $A \leq e^{10}$ is a constant. 

  It follows from this and \eqref{6.1} that
\begin{align}
\label{S2bound}
\begin{split}
 S_2 \leq & A X  \prod_{i=1}^{R} \exp \Big ( \big(\frac {(2k-1)^{2}}{2}+2k-1 \big )\sum_{\substack{p \in P_i }}\frac 1{p}\Big ) \times \sum_{q \in \bigcup P_j} \sum_{l \geq 0}  \Big( \frac{\log q}{q^{l+1}}\frac{(1-2k)^{2l+1}}{(2l+1)!} \frac {1}{g(q^{2l+1})}\Big) \\
 \leq & A X \prod_{i=1}^{R} \exp \Big ( \big(\frac {(2k-1)^{2}}{2}+2k-1 \big )\sum_{\substack{p \in P_i }}\frac 1{p}\Big )\times \sum_{q \in \bigcup P_j} \Big (  \frac{\log q}{q} \Big ) \\
 \leq & A X \prod_{i=1}^{R} \exp \Big ( \big(\frac {(2k-1)^{2}}{2}+2k-1 \big )\sum_{\substack{p \in P_i }}\frac 1{p}\Big )\times  \Big (\frac {\log X}{10^{2M}}+O(1) \Big ),
\end{split}
\end{align}
  where the last estimation above follows from Lemma \ref{RS}. 

   Combining \eqref{S1bound} and \eqref{S2bound}, we deduce that, by taking $M$ large enough, 
\begin{align*}
%%\label{S1-S2}
\begin{split}
 S_1-S_2 \gg & X \log X \prod_{i=1}^{R} \exp \Big ( \big(\frac {(2k-1)^{2}}{2}+2k-1 \big )\sum_{\substack{p \in P_i }}\frac 1{p}\Big ).
\end{split}
\end{align*}
  Applying Lemma \ref{RS} again, we see that the proof of the proposition now follows from above.

\section{Proof of Proposition \ref{Prop5}}

   It follows from Lemma \ref{lem1} that we have
\begin{align}
\label{upperboundprodofN}
 \sumstar_{(d,2)=1}  \mathcal{N}(d, 2k-1)^{\frac {2(2-3k)}{1-2k}}\mathcal{N}(d, 2-2k)^{2} \Phi(\frac dX) \le
\sumstar_{(d,2)=1}  \Big ( \prod^R_{j=1} \Big ( {\mathcal N}_j(d, 2k) \Big(1+ O\big(e^{-\ell_j} \big ) \Big) +   {\mathcal Q}_j(d)  \Big )\Big )\Phi(\frac dX).
\end{align}
  We now use the notations in Section \ref{sec 4} to write for $1\le j\le R$,
\begin{align*}
 {\mathcal N}_j(d, 2k)=& \sum_{n_j} \frac{1}{\sqrt{n_j}} \frac{(2k)^{\Omega(n_j)}}{w(n_j)}  b_j(n_j)\chi_{8d}(n_j), \\
  {\mathcal P}_j(d)^{2r_k\ell_j} =&  \sum_{ \substack{ \Omega(n_j) = 2r_k\ell_j \\ p|n_j \implies  p\in P_j}} \frac{1}{\sqrt{n_j}}\frac{(2r_k\ell_j)! }{w(n_j)}\chi_{8d}(n_j),
\end{align*}
  where $r_k=1+(2-3k)/(1-2k)$.

  As
\begin{align}
\label{Stirling}
  (\frac ne)^n \leq n! \leq n(\frac ne)^n,
\end{align}
 we deduce that
\begin{align*}
 & \Big( \frac{24 }{\ell_j} \Big)^{2r_k\ell_j}(2r_k \ell_j)!
\leq  2r_k\ell_j\Big( \frac{48r_k }{e} \Big)^{2r_k\ell_j}.
\end{align*}
  It follows from this that we can write ${\mathcal N}_j(d, 2k) \Big(1+ O\big(e^{-\ell_j} \big ) \Big) + {\mathcal Q}_j(d)$
as a Dirichlet polynomial of the form
\begin{align*}
  D_j(d)=\sum_{n_j \leq X^{2r_k/\ell_j}}\frac{a_{n_j}}{\sqrt{n_j}}\chi_{8d}(n_j)
\end{align*}
   where for some constant $B(k)$ depending on $k$ only,
\begin{align*}
  |a_{n_j}| \leq B(k)^{\ell_j}.
\end{align*}
    We then apply Lemma \ref{PropDirpoly} to evaluate the right side expression in \eqref{upperboundprodofN} above and deduce from estimations in \eqref{sumoverell} that for large $M, N$, the contribution arising from the error term in \eqref{meancharsum} is
\begin{align*}
 \ll B(k)^{\sum^R_{j=1}\ell_j}X^{1/2+\varepsilon}X^{2r_k\sum^R_{j=1}\frac 1{\ell_j}} \ll B(k)^{R\ell_1}X^{1/2+\varepsilon}X^{\frac {4r_k}{\ell_R}} \ll X^{1-\varepsilon}.
\end{align*}

   We may thus focus on the main term contributions, which implies that the right side expression of \eqref{upperboundprodofN} is
\begin{align}
\label{maintermbound}
\begin{split}
 \ll & X \times \sum \Big ( \text{square term in the expansion of } \prod^R_{j=1}D_j(d) \times \text{a corresponding factor } \Big ) \\
=& X \times \prod^R_{j=1} \sum \Big ( \text{square term in } D_j(d) \times \text{a corresponding factor } \Big ),
\end{split}
\end{align}
  where by ``a square term" in a Dirichlet polynomial $\displaystyle \sum_n \frac {a_n}{n^s}$ we mean a term corresponding to $n=\square$ and
\begin{align*}
  \text{a corresponding factor }= \prod_{p | n}\Big ( \frac p{p+1} \Big ).
\end{align*}

   We first note that
\begin{align}
\label{sqinN}
\begin{split}
 &  \sum \text{square term in } {\mathcal N}_j(d, 2k)   \times \text{a corresponding factor }
=     \sum_{n_j=\square} \frac{1}{\sqrt{n_j}} \frac{(2k)^{\Omega(n_j)}}{w(n_j)}  b_j(n_j) \prod_{p | n_j}\Big ( \frac p{p+1} \Big ) \\
\leq & \prod_{p \in P_j}\Big ( 1 + \frac {\big(2k \big )^2}{2p}\Big ( \frac p{p+1} \Big )+\sum_{i \geq 1}\frac {(2k)^{2i}}{p^{2i}(2i)!}\Big ( \frac p{p+1} \Big )\Big ) \leq \prod_{p \in P_j}\Big ( 1 + \frac {\big(2k \big )^2}{2p}\Big ( \frac p{p+1} \Big )+\frac {1}{p^{2}}\Big )\\
\leq & \exp \Big (\frac {\big(2k \big )^2}{2}\sum_{p \in P_j}\frac 1p+\sum_{p \in P_j}\frac {1}{p^{2}}\Big ),
\end{split}
\end{align}
 where we apply the relation that $1+x \leq e^x$ for any real $x$ again to obtain the last estimation above.

  Next, we notice that
\begin{align*}
 &  \sum \text{square term in } {\mathcal Q}_j(d)  \times \text{a corresponding factor } \\
\ll & \Big( \frac{24 }{\ell_j} \Big)^{2r_k\ell_j}(2r_k\ell_j)!\sum_{ \substack{ n_j=\square \\ \Omega(n_j) = 2r_k\ell_j \\ p|n_j \implies  p\in P_j}} \frac{1}{\sqrt{n_j}}\frac{1 }{w(n_j)}\prod_{p | n_j}\Big ( \frac p{p+1} \Big ) \ll \Big( \frac{24 }{\ell_j} \Big)^{2r_k\ell_j} \frac {(2r_k\ell_j)!}{ (r_k\ell_j)!}\Big (\sum_{p \in P_j}\frac 1{p} \Big )^{r_k\ell_j}.
\end{align*}
   We apply \eqref{boundsforsumoverp}  and \eqref{Stirling} to estimate the right side expression above to see that for some constant $B_1(k)$ depending on $k$ only,
 \begin{align*}
 &  \sum \text{square term in } {\mathcal Q}_j(d)  \times \text{a corresponding factor } \\\\
\ll & B_1(k)^{\ell_j}e^{-r_k\ell_j\log (r_k\ell_j)}\Big (\sum_{p \in P_j}\frac 1{p} \Big )^{r_k\ell_j}
\ll  B_1(k)^{\ell_j}e^{-r_k\ell_j\log (r_k\ell_j)}e^{r_k\ell_j \log (2\ell_j/N)} \\
\ll & e^{-\ell_j}\exp \Big (\frac {(2k)^2}{2}\sum_{p \in P_j}\frac 1{p}  \Big ).
\end{align*}

  Combining the above with \eqref{maintermbound} and \eqref{sqinN}, we see that the right side expression in \eqref{upperboundprodofN} is
\begin{align*}
%%\label{errorbound}
\begin{split}
\ll & X \prod^R_{j=1} \Big (1+e^{-\ell_j} \Big ) \times \exp \Big (\frac {\big(2k \big )^2}{2}\sum_{p \in P_j}\frac 1p+\sum_{p \in P_j}\frac {1}{p^{2}}\Big ) \ll X ( \log X  )^{\frac {(2k))^2}{2}},
\end{split}
\end{align*}
  where we deduce the last estimation above using Lemma \ref{RS}. The assertion of the proposition now follows from this.

\vspace*{.5cm}

\noindent{\bf Acknowledgments.} P. G. is supported in part by NSFC grant 11871082.

\bibliography{biblio}
\bibliographystyle{amsxport}

\vspace*{.5cm}

\end{document}